\documentclass[reqno]{amsart}

\usepackage{enumitem}
\usepackage{amssymb}
\usepackage{hyperref}
\usepackage{color}

\newcommand*{\mailto}[1]{\href{mailto:#1}{\nolinkurl{#1}}}

\newtheorem{theorem}{Theorem}[section]

\newtheorem{lemma}[theorem]{Lemma}

\newtheorem{proposition}[theorem]{Proposition}

\newcommand{\R}{{\mathbb R}}
\newcommand{\C}{{\mathbb C}}
\newcommand{\cL}{{\mathcal L}}
\newcommand{\cF}{{\mathcal F}}

\newcommand{\cR}{{\mathcal R}}
\newcommand{\cK}{{\mathcal K}}
\newcommand{\cV}{{\mathcal V}}
\newcommand{\cU}{{\mathcal U}}

\newcommand{\om}{\omega}
\newcommand{\ds}{\displaystyle}
\newcommand{\E}{\mathrm{e}}

\numberwithin{equation}{section}

\begin{document}

\title[On Dispersive Estimates for Klein--Gordon Equations]{On Dispersive Estimates for
One-Dimensional  Klein--Gordon Equations}

\author[E.\ Kopylova]{Elena Kopylova}
\address{Faculty of Mathematics\\ University of Vienna\\
Oskar-Morgenstern-Platz 1\\ 1090 Wien\\ Austria}
\email{\mailto{Elena.Kopylova@univie.ac.at}}
\urladdr{\url{http://www.mat.univie.ac.at/~ek/}}
\thanks{{\it Research supported by the Austrian Science Fund (FWF) under Grant No.\ P 34177-N}}

\keywords{Klein--Gordon equation, dispersive estimates, scattering}
\subjclass[2010]{Primary 35L10, 34L25; Secondary 81U30, 81Q15}

\begin{abstract}
We improve previous results on  dispersive  decay for  1D Klein-Gordon equation.
We develop a novel approach, which  allows us  to establish the decay in more strong norms 
and weaken the assumption on the potential.
\end{abstract}

\maketitle

%%%%%%%%%%%%%%%%%%%%%%%%%%%%%%%%%%%%%%%%%%%%%%%%%%%%%%%%%%%%%%%%%%%%%%%%%%
\section{Introduction}
%%%%%%%%%%%%%%%%%%%%%%%%%%%%%%%%%%%%%%%%%%%%%%%%%%%%%%%%%%%%%%%%%%%%%%%%%%
We are concerned with  one-dimensional   Klein--Gordon equation
\begin{equation} \label{KGE}
\ddot \psi(x,t)= K\psi:=(\partial^2_x-m^2+V(x)) \psi(x,t),\quad (x,t)\in\R^2,\quad m>0,
\end{equation}
with real  potential $V.$ In vector form equation \eqref{KGE} reads
\begin{equation} \label{KGEv}
i\dot \Psi(t)=\cK \Psi(t),
\end{equation}
where
\begin{equation} \label{H}
\Psi(t)=\begin{pmatrix}
  \psi(t)
  \\
  \dot\psi(t)
  \end{pmatrix},\quad
 \cK = i\begin{pmatrix}
  0                          &   1\\
 \Delta-m^2+V  &   0
\end{pmatrix}.
\end{equation}
We assume that $V(x)$ is a continuous  function, and
\begin{equation}\label{V}
    |V(x)|\le C\langle x\rangle^{-\beta},\quad\quad \langle x\rangle=(1+|x|^2)^{1/2},\quad x\in\R
\end{equation}
with some $\beta>0$.

Our goal is to prove dispersive decay estimates for these equations. This is a well-studied
area and  our main contribution is to improve previous results. 
To formulate them, we introduce the weighted  Sobolev spaces $W^{\ell,p}_{\sigma}$, $\sigma\in\R$, associated with the norm
\begin{equation*}
   \Vert \psi\Vert_{W^{\ell,p}_{\sigma}}=  \sum\limits_{k=0}^\ell\Vert\langle x\rangle^{\sigma}\psi^{(k)}\Vert_{L^{p}}, \quad 1\le p \le\infty,
 \end{equation*}
where $L^p=L^p(\R)$. Denote $L^p_\sigma=W^{0,p}_\sigma$,
$H^\ell_\sigma:=W^{\ell,2}_\sigma$, so $L^2_\sigma:=H^0_\sigma$.
Of course, the case $\sigma=0$ corresponds to the usual $W^{\ell,p}$ spaces without weight.
Denote
$$
{\cF}_{\sigma}=H^1_\sigma\oplus L^2_\sigma,\qquad \cL^1_{\sigma}=W^{1,1}_\sigma\oplus L^1_{\sigma},
\qquad \cL^{\infty}_{\sigma}=W^{1,\infty}_\sigma\oplus L^\infty_{\sigma}. 
$$
We recall (e.g., \cite{KK12} ) that for $V$ satisfies \eqref{V} with $\beta>1$, the spectrum of  $\cK$ consists of 
of a purely absolutely continuous part, covering $\Gamma:=(-\infty,-m)\cup (m,\infty)$, plus a finite number of eigenvalues located in $\R\setminus\Gamma$. 
In addition, there could be resonances at the edges $\pm m$ of the continuous spectrum.

Recall that the edges points $\lambda=\pm m$ are resonances if the equation $K\psi=-m^2\psi$ has nonzero solution
$\psi\in L^\infty_{-1/2-} (\R)\setminus L^2(\R)$ (cf. \cite{M}). In this case the operator $(K+m^2)^{-1}$ is unbounded.
In particular, the edges points $\lambda=\pm m$ are resonances for free Klein-Gordon equation with $V(x)\equiv 0$.

Our first result read as follows:
%%%%%%%%%%%%%%%%%%%%%%%%%%%%%%%%%%%%%%%%%%%%%%%%%%%%%%%%%%%%%%%%%%%%%%%%%%%%%%%%%
\begin{theorem}\label{Main}
Let \eqref{V} hold with $\beta>2$. Then the following decay holds
\begin{equation}\label{full}
\Vert \E^{-it\cK}P_c\Vert_{{\cF}_{\sigma}\to {\cF}_{-\sigma}}=\mathcal{O}(t^{-1/2}),\quad t\to\infty,\quad\sigma>1,
\end{equation}
where $P_c=P_c(\cK)$ is the orthogonal projection in
$L^2(\R)$ onto the continuous spectrum of $\cK$.
\end{theorem}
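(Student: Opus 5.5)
We will prove \eqref{full} through a spectral representation of the group, written in terms of the Jost solutions of $K$. The estimate then reduces to decay bounds for one-dimensional oscillatory integrals in the spectral variable $k=\sqrt{\om^2-m^2}$.

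\textbf{Step 1: reduction to scalar operators.} Let $H=-\partial_x^2-V$. Then $-K=H+m^2$ and $\cK^2=\mathrm{diag}(H+m^2,\,H+m^2)$. On the continuous subspace the group has the form
\[
\E^{-it\cK}P_c=\begin{pmatrix}\cos(t\sqrt{H+m^2}) & \frac{\sin(t\sqrt{H+m^2})}{\sqrt{H+m^2}}\\ -\sqrt{H+m^2}\,\sin(t\sqrt{H+m^2}) & \cos(t\sqrt{H+m^2})\end{pmatrix}P_c(H).
\]
So it suffices to bound $\E^{\pm it\sqrt{H+m^2}}$ and $\E^{\pm it\sqrt{H+m^2}}(H+m^2)^{\mp1/2}$, composed with $\partial_x$ where the $H^1$ components require it, as maps $L^2_\sigma\to L^2_{-\sigma}$. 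The $\partial_x$ on the output side can be moved onto the kernel; on the input side we integrate by parts. We write $\omega(k)=\sqrt{k^2+m^2}$. By Stone's formula,
\[
\E^{-it\omega(\sqrt H)}P_c\,f(x)=\frac1{2\pi i}\int_{-\infty}^{\infty}\E^{-it\omega(k)}\frac{f_+(x,k)f_-(y,k)}{W(k)}\,k\,dk
\]
for $x\ge y$ (symmetrically otherwise), applied to $f(y)$. Here $f_\pm(x,k)=\E^{\pm ikx}h_\pm(x,k)$ are the Jost solutions and $W$ is their Wronskian. Equivalently, $T(k)=2ik/W(k)$, so the kernel is $T(k)f_+(x,k)f_-(y,k)/(4\pi)$ with $T$ the transmission coefficient.

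\textbf{Step 2: pointwise kernel bounds.} Since $\beta>2$, the classical Deift--Trubowitz/Marchenko estimates give $|h_\pm(x,k)|\le C\langle x\rangle$ and $|\partial_k h_\pm(x,k)|\le C\langle x\rangle^2$ uniformly in $k\in\R$. Moreover $T$, $R_\pm$ are continuous, and $T\in W^{1,1}_{\rm loc}$ with derivative controlled by $\langle x\rangle$-weighted moments of $V$, which requires $\beta>2$. This holds in both the resonant and the non-resonant case at $k=0$. We then prove the oscillatory-integral bound
\[
\Big|\int\E^{-it\omega(k)+ik(x-y)}a(x,y,k)\,dk\Big|\le C t^{-1/2}\langle x\rangle^{2}\langle y\rangle^{2}.
\]
Here $a$ is bounded, with $\partial_k a\in L^1$ in $k$ up to polynomial weights. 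To handle large $k$ we insert a cutoff: for $|k|\ge1$ the phase $\omega(k)$ has $\omega''\sim \langle k\rangle^{-3}$. The $k$-derivative weight and the possible $\omega(k)$ factors from $\partial_x$ and $\sqrt{H+m^2}$ are absorbed by the smoothing built into the spaces $H^1_\sigma\oplus L^2_\sigma\to H^1_{-\sigma}\oplus L^2_{-\sigma}$. Near $k=0$, on the other hand, $\omega''(0)=1/m>0$, so van der Corput's lemma with the amplitude's bounded variation gives exactly $t^{-1/2}$.

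\textbf{Step 3: passage to weighted $L^2$.} Pointwise kernel bounds of the form $t^{-1/2}\langle x\rangle^{a}\langle y\rangle^{a}$ give $L^2_\sigma\to L^2_{-\sigma}$ boundedness once $\sigma>a+1/2$. With the crude weight $\langle x\rangle^2$ this would need $\sigma>5/2$, so the weight must be reduced. Instead of differentiating $h_\pm$ in $k$ everywhere, we write $T(k)f_+(x,k)f_-(y,k)$ via $f_\pm=T\bar f_\mp+R_\mp f_\mp$ on the favorable half-lines. This puts the main oscillation $\E^{ik(x-y)}$ into the phase, leaving amplitudes $T h_+ h_-$ that are bounded uniformly in $x,y$ on the relevant regions ($|h_\pm|\le C$ for $\pm x\ge 0$). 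Van der Corput in its variation form then requires only $\|\partial_k(\text{amplitude})\|_{L^1_k}$. Using $\int|\partial_k h_\pm(x,k)|dk\le C\langle x\rangle$-type bounds, we expect kernel bounds $t^{-1/2}\langle x\rangle^{1/2+}\langle y\rangle^{1/2+}$, or even a $t^{-1/2}$ bound in $L^1\to L^\infty$ with mild weights, yielding $\sigma>1$.

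\textbf{Main obstacle.} The hardest part is the low-energy region $k\to0$, especially in the resonant case $W(0)=0$. There $T(0)\neq0$ and the amplitude need not vanish, and the $1/\sqrt{H+m^2}$ factor is harmless since $m>0$. We must show that $T$ and $k\mapsto h_\pm(x,k)$ have total variation on $[-1,1]$ bounded by $C\langle x\rangle^{1/2+}$, with only $\beta>2$. We would first try the Volterra integral equation for $h_\pm$, differentiated once in $k$, and estimate $\partial_k h_\pm$ in $L^1_k$ rather than in $L^\infty_k$, exploiting the $1/k$ structure of the kernel $\frac{\E^{2ik(y-x)}-1}{2ik}$.
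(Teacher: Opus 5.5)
\textbf{Main gap: high energies.} Your Step~2 asks for a pointwise bound $\bigl|\int\E^{-it\om(k)+ik(x-y)}a(x,y,k)\,dk\bigr|\le Ct^{-1/2}\langle x\rangle^{2}\langle y\rangle^{2}$ over all $k$. Step~3 then turns this into $L^2_\sigma\to L^2_{-\sigma}$ bounds by a Hilbert--Schmidt argument. This fails for $|k|\ge 1$, for two reasons:
\begin{itemize}
\item There $\om''(k)=m^2/\om^3(k)\sim|k|^{-3}$, so van der Corput gives no uniform bound.
\item The amplitudes coming from ${\mathcal M}_t(k)$ are $O(1)$ or $O(k)$. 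Even after moving one derivative onto the $H^1$ component they are still $O(1)$.
\end{itemize}
The spaces $H^1_\sigma\oplus L^2_\sigma$ contain no extra smoothing that could ``absorb'' this. Already for $V=0$, the $(2,2)$ entry $\cos(t\om(D))$ must map $L^2_\sigma\to L^2_{-\sigma}$ with no derivative available on either side. By \eqref{U} its kernel $\partial_tU(t,x-y)$ contains $\frac12[\delta(x-y-t)+\delta(x-y+t)]$. So no pointwise kernel bound exists at all, even though the weighted $L^2$ norm does decay.

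The paper handles this by splitting off $\zeta(\cK^2)$ and using the Born expansion:
\begin{itemize}
\item $\cU_0$ is estimated directly in weighted $L^2$ by integrating by parts in $k$ (Lemma~\ref{lem2}). On the support of $\zeta(k^2+m^2)$, $\om'(k)=k/\om(k)$ is bounded below, which gives $O(t^{-\sigma})$.
\item $\cU_1,\cU_2$ are handled by the Duhamel formula \eqref{conv} and the bound $\int_0^t(1+t-\tau)^{-\sigma}(1+\tau)^{-\sigma}\,d\tau\le C(1+t)^{-\sigma}$. This bound, together with $|V|\le C\langle x\rangle^{-2\sigma}$ so that $\cV:\cF_{-\sigma}\to\cF_\sigma$, is where $\sigma>1$ and $\beta>2$ actually enter.
\item Only the remainder $\mathcal W$ is treated by pointwise oscillatory bounds, via Lemma~\ref{l1} with $\psi\in\mathcal A_1$. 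Its kernel carries amplitude $O(k^{-2})$ or better, also after $\partial_x$, which is enough.
\end{itemize}

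\textbf{Second gap: low energies.} Here your plan rests on derivative bounds you do not have. The estimate $|\partial_kh_\pm(x,k)|\le C\langle x\rangle^2$ is the classical one for $V\in L^1_2$, which essentially needs $\beta>3$; under $\beta>2$ you only know $V\in L^1_1$. You also leave open the bounded-variation control of $T$ and $h_\pm$ near $k=0$ in the resonant case, and the weight $\langle x\rangle^{1/2+}$ is only ``expected.''

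Your idea of rewriting the kernel so that the amplitude stays bounded uniformly in $x,y$ is the right one; it is what leads to $\psi(x,y,k)$ in \eqref{psi}. The paper, however, measures that amplitude in the Wiener algebra rather than by total variation:
\begin{itemize}
\item Lemma~\ref{lemconst} gives $\|\psi(x,y,\cdot)\|_{\mathcal A_1}\le C$ uniformly in $x,y$ for $V\in L^1_1$, including the resonant case.
\item The Wiener-algebra van der Corput lemma \cite[Lemma 5.4]{EKTM} then yields the uniform $\cL^1\to\cL^\infty$ bound of Theorem~\ref{thKG1}, hence $\cF_\sigma\to\cF_{-\sigma}$ decay for every $\sigma>1/2$.
\end{itemize}
So no $k$-derivative of the Jost solutions is needed. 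Also, the threshold $\sigma>1$ does not come from low-energy weights, as your Step~3 suggests, but from the high-energy Duhamel convolution.
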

%%%%%%%%%%%%%%%%%%%%%%%%%%%%%%%%%%%%%%%%%%%%%%%%%%%%%%%%%%%%%%%%%%%%%%%%%%%%%%%%
Note that for the free Klein-Gordon  equation 
with $V=0$ the estimate \eqref{full} follow  from the explicit formula for the time evolution (see e.g. \cite{KK}).
Let us emphasize that we not require additional decay of $V$ for \eqref{full} in the case when edges of the continuous spectrum are resonances.
%%%%%%%%%%%%%%%%%%%%%%%%%%%%%%%%%%%%%%%%%%%%%%%%%%%%%%%%%%%%%%%%%%%%%%%%%%%%%%%%%
In the remaining result we restrict ourselves to non-resonance case.
\begin{theorem}\label{Main-new}
Let \eqref{V} hold with $\beta>3$. Then, in the non-resonant case, the following decay holds
\begin{equation}\label{full-new}
\Vert \E^{-it\cK}P_c\Vert_{{\cF}_{\sigma}\to {\cF}_{-\sigma}}=\mathcal{O}(t^{-3/2}),\quad t\to\infty,\quad \sigma>3/2.
\end{equation}
\end{theorem}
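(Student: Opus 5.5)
We prove Theorem~\ref{Main-new} through the spectral representation of $\E^{-it\cK}P_c$ in terms of the resolvent of the Schr\"odinger operator $H=-\partial_x^2-V$. Since $\cK^2=\mathrm{diag}(-K,-K)$ on the relevant components, the matrix entries of $\E^{-it\cK}P_c$ are $\cos(t\sqrt{H+m^2})P_c$ and $\sin(t\sqrt{H+m^2})(H+m^2)^{-1/2}P_c$, together with the derivatives of these kernels needed for the $H^1$ components. By Stone's formula, each entry is an oscillatory integral over $\om\in\Gamma$ of the form
\[
\int_\Gamma \E^{-it\om}\, a(\om)\,\big[R(\om^2-m^2+i0)-R(\om^2-m^2-i0)\big]\,\om\,d\om ,
\]
where $R(\zeta)=(H-\zeta)^{-1}$ and $a$ is one of $1$ or $\om^{-1}$. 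Change variables to $k$ with $\om=\pm\sqrt{k^2+m^2}$. The problem then becomes bounding, in the norm $L^2_\sigma\to L^2_{-\sigma}$ (and the analogous $H^1$ norms), integrals of the form
\[
\int_0^\infty \E^{-it\sqrt{k^2+m^2}}\, b(k)\, \mathcal{R}(k)\,dk ,
\]
where $\mathcal{R}(k)$ is the jump of the resolvent, written through Jost solutions as $\mathcal{R}(k)(x,y)\sim f_\pm(x,k)f_\mp(y,k)/W(k)$.

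We split with a smooth cutoff into a low-energy part $k\le\varepsilon$ and a high-energy part $k\ge\varepsilon$.

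\emph{High-energy part.} Here the phase $\phi(k)=\sqrt{k^2+m^2}$ has $\phi'(k)=k/\sqrt{k^2+m^2}\ge c>0$, so we integrate by parts twice. Each integration by parts gains $t^{-1}$ and costs one $k$-derivative of the kernel. The $k$-derivatives of the Jost solutions $f_\pm(x,k)=\E^{\pm ikx}(1+o(1))$ grow like $\langle x\rangle^{j}$ for $\partial_k^j$. With $\beta>3$ the Volterra estimates give
\[
|\partial_k^j h_\pm(x,k)|\le C\langle x\rangle^{j}, \qquad j\le 2 .
\]
Two integrations by parts would give $t^{-2}$ but require weights $\langle x\rangle^{2}$, which is more than $\sigma>3/2$ allows. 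We therefore integrate by parts once and then use an interpolation/van der Corput type argument in $L^2_\sigma\to L^2_{-\sigma}$ to obtain $t^{-3/2}$ with weight $\sigma>3/2$. Decay of the kernel in $k$ at infinity comes from the $1/W(k)\sim 1/k$ factor. For the $H^1$ components we use the extra factor of $\om$ and the usual smoothing in $x$.

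\emph{Low-energy part.} This is the main obstacle. Near $k=0$ the phase satisfies
\[
\phi(k)=m+\frac{k^2}{2m}+O(k^4),
\]
so it is stationary, and the decay must come from the vanishing of the amplitude at $k=0$. In the non-resonant case $W(0)\ne0$, and the jump of the resolvent vanishes to first order:
\[
\mathcal{R}(k)=k\,\mathcal{R}_1(x,y)+O(k^2).
\]
This is because $\mathcal{R}(k)=\frac{1}{2ik}\big(\ldots\big)$ in the free case, and the non-resonant condition makes the leading term $\propto k$ in the perturbed case as well. We will show that $k\mapsto \mathcal{R}(k)$ is $C^1$ near $0$ as a map into $\mathcal{B}(L^2_\sigma,L^2_{-\sigma})$ and satisfies $\mathcal{R}(0)=0$. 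This uses $\beta>3$ to get two $k$-derivatives of $f_\pm$ bounded by $\langle x\rangle^{2}$, and $\sigma>3/2$ so that $\langle x\rangle^{1}\langle x\rangle^{-\sigma}\in L^2$ for the first derivative. The stationary phase lemma with amplitude vanishing at the stationary point, in the form
\[
\Big|\int \E^{-itk^2}g(k)\,dk\Big|\le Ct^{-3/2}\Big(\|g'\|_{L^\infty}+\|g'\|_{L^1}\Big) \quad \text{when } g(0)=0,
\]
then gives $t^{-3/2}$. The delicate point is verifying $g'\in L^1$ with values in operators $L^2_\sigma\to L^2_{-\sigma}$ while only using weight $\langle x\rangle^{\sigma}$ with $\sigma>3/2$. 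If this fails, we will try a Zygmund-type argument: use a H\"older-$1/2$ modulus of $g'$ instead of $g''$, since $\partial_k^2$ would cost the weight $\langle x\rangle^2$.

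Combining the two parts and summing over the $\pm$ branches of $\Gamma$ yields \eqref{full-new}.
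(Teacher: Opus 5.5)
Your low/high energy splitting matches the paper's, but each half of your argument has a genuine gap.

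\textbf{High energies.} You claim that decay of the $k$-integral at infinity comes from $1/W(k)\sim 1/k$. This is wrong: that factor is cancelled by the Jacobian $d\om=(k/\om)\,dk$ and by the factors $\om,\om^2$ in \eqref{R}.
\begin{itemize}
\item For the cosine entry of $\mathcal M_t(k)$ the amplitude is $\psi(x,y,k)\to 1$, and it grows for the components needed in $H^1$.
\item Already for $V=0$ the propagator is singular on the light cone: the $t$-derivative of \eqref{U} contains $\frac12\delta(t-|x-y|)$.
\item The operator-valued amplitude $k\,[R(k^2+i0)-R(k^2-i0)]$ of the $(1,1)$ entry has $L^2_\sigma\to L^2_{-\sigma}$ norm of order one, so it is not integrable in $k$.
\item Integrating by parts against $\E^{-it\om}$ only trades $t^{-1}$ for a factor $|p|\le|x|+|y|$ coming from $\E^{ik|x-y|}$. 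It gives no gain in $k$.
\item The curvature $\phi''(k)=m^2/\om^3(k)$ degenerates, so van der Corput loses $K^{3/2}$ on each dyadic block $k\sim K$.
\end{itemize}
Hence no kernel-level or absolutely convergent operator-valued stationary-phase argument applies to the full perturbed spectral integral, and your ``interpolation/van der Corput argument'' has no mechanism to act on.

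The missing idea is the Born expansion:
\begin{itemize}
\item The free part $\cU_0(t)$ is estimated via Plancherel (orthogonality, not absolute convergence), repeated integration by parts and Stein--Weiss interpolation. This is Lemma \ref{lem2}, giving $t^{-\sigma}$.
\item The next two terms are handled by the Duhamel convolution \eqref{conv} using $|V|\le C\langle x\rangle^{-\beta}$.
\item Only the remainder $\mathcal W(t)$ is treated at kernel level. It carries an extra factor $k^{-3}$ from the three free resolvents. One integration by parts costs $|p|$ and $\partial_k\psi$, the latter controlled by \eqref{psi-est}. Then Lemma \ref{l1} applies, and its hypothesis $\alpha>3/2$ is exactly what compensates the degenerate curvature.
\end{itemize}

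\textbf{Low energies.} Your stationary-phase inequality is false. Take $g(k)=|k|\chi(k)$ with $\chi$ an even cutoff, $\chi(0)=1$. Then $g(0)=0$ and $g'\in L^1\cap L^\infty$, but $\int\E^{-itk^2}g\,dk=\int_0^\infty\E^{-its}\chi(\sqrt s)\,ds=(it)^{-1}+O(t^{-2})$. The H\"older-$1/2$ fallback also fails: $g(k)=|k|^{3/2}\chi(k)$ has $g'\in C^{1/2}$ and decays only like $t^{-5/4}$. Vanishing at $k=0$ plus one derivative is not enough, and a second operator-valued derivative costs $\langle x\rangle^2$, as you noticed.

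What is used instead is Theorem \ref{thKG1}(ii), taken from EKTM. It rests on three points.
\begin{itemize}
\item The exact factorization $T(k)=2ik/W(k)$ with $W(0)\ne0$ gives $\psi(x,y,k)=k\,\tilde\psi(x,y,k)$.
\item The factor $k$ is absorbed by one integration by parts through $\partial_k\E^{-it\om(k)}=-it\frac{k}{\om}\E^{-it\om(k)}$, which yields $t^{-1}$.
\item The remaining $t^{-1/2}$ comes from van der Corput for amplitudes in the Wiener algebra $\mathcal A$. Write the amplitude as $\int\E^{ikq}\hat a(q)\,dq$; the van der Corput bound is uniform in the linear term $k(p+q)$. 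So one only needs $\|\partial_k\tilde\psi\|_{\mathcal A}\le C\langle x\rangle\langle y\rangle$, not bounded variation of a derivative.
\end{itemize}
This gives $t^{-3/2}$ in $\cL^1_1\to\cL^\infty_{-1}$. The Cauchy--Schwarz embeddings $L^2_\sigma\subset L^1_1$ and $L^\infty_{-1}\subset L^2_{-\sigma}$, valid for $\sigma>3/2$, convert this into the $\cF_\sigma\to\cF_{-\sigma}$ bound. The same embedding is what handles $\mathcal W(t)$ in the high-energy part.
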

%%%%%%%%%%%%%%%%%%%%%%%%%%%%%%%%%%%%%%%%%%%%%%%%%%%%%%%%%%%%%%%%%%%%%%%%%%%%%%%%%%%%%%%%%%%%%55
The  decay (\ref{full-new}) was obtained in \cite {KK,K10}
under more restrictive condition  $\sigma>5/2$ and $\beta>5$ and under an additional assumption on the decay of $\nabla V(x)$. 
Now we improve this result to $\sigma>3/2$ assuming  (\ref{V}) with $\beta>3$ only.

The low energy   decay in $\cL^1\to \cL^\infty$ norms  was obtained in \cite {EKTM} (Theorem \ref{thKG1} below).
The approach of \cite {EKTM}  depends on the fact that the scattering matrix is in the Wiener algebra (i.e.\ its Fourier transform
is integrable). 
This result  implies the decay  (\ref{full})  and (\ref{full-new}) for the low energy part of solutions.
Unfortunately,  the decay  in $\cL^1\to \cL^\infty$ norms for the operator 
$\E^{-it\cK}P_c$ does not hold (even in the case $V=0$), in contrast to the Schr\"odinger operator. 
Namely,  for the free Schr\"odinger operator  the decay holds since  its integral kernel is bounded for $|t|\ge 1$ and  decays uniformly:
 \begin{equation}\label{full0}
\Vert \E^{-i\Delta t}(x,y)\Vert_{L^1\to L^{\infty}}
=\sup\limits_{x,y\in\R^3}\Big|\frac{\E^{i|x-y|^2/4t}}{\sqrt{4\pi it}}\Big|
\sim |t|^{-1/2},\quad |t|\ge 1.
\end{equation}
On the other hand, the Green function  $U(t,x,y)$ of the free Klein-Gordon equation does not decay for $|x-y|\sim |t|$. Namely, for $t>0$,
\begin{equation}\label{U}
 U(t,x,y)=\frac{1}{2}\theta(t-|x-y|)J_{0}(m\sqrt{t^{2}-|x-y|^{2}})\sim \frac 12,\quad  |x-y|\sim t.
\end{equation}

Here $J_{0}$ is the Bessel function of order 0, and $\theta$ is the Heavyside function.
This difference  reflects the distinct character of the wave propagation 
for the relativistic and nonrelativistic equations.
Namely, the  singularities of solutions  to the Schr\"odinger equations are concentrated at $t=0$ and disappear at infinity for $t\ne 0$ due to
infinite speed of propagation. On the other hand, in the case of  Klein-Gordon equation, the singularities move
with bounded velocities, thus they are present forever in the space.

As shown in  \cite {EKTM},  the decay  of solution $\psi(t)$ to \eqref{KGE} in $L^\infty$ norm  is possible only if  
the initial data $(\psi(0),\dot\psi(0)$ belong to generalized Sobolev space $W^{\frac 32,1}\oplus W^{\frac 12,1}$.

%However, those extra derivatives can be avoided if we work directly in  weighted energy norms. %as in (\ref{full})  and (\ref{full-new}).

Our approach relies on the  
Born expansion for high energy part of the dynamical group $\cU(t)=\cU(\zeta,t)=\E^{-it\cK}\zeta (\cK^2)$,  where 
$\zeta\in C^\infty(\R)$, $\zeta(\omega)=0$ for $\omega\le m^2+1$, and
$\zeta(\omega)=1$ for $\omega\geq m^2 + 2$,
\begin{eqnarray}\nonumber
\cU(t)&=&\cU_0(t)-i\int_0^t \cU_0(t-s){\mathcal V} \cU_0 (s)ds\\
\nonumber
&-&\int_0^t \cU_0(t-s){\mathcal V}\Big(\int_0^{s} \cU_0 (s-u){\mathcal V} \cU_0 (u)du\Big)ds+{\mathcal W}(t),
\end{eqnarray}
where $\cU_0(t)=\E^{-it\cK_0}\zeta (\cK_0^2)$, and 
$\cV$ is the matrix (\ref{cV}).

First we show that  $\Vert \cU_0(t)\Vert_{\cF_{\sigma}\to \cF_{-\sigma}}\le C(1+|t|)^{-\sigma}$ for any $\sigma>0$.
Then the  decay of the next two terms (under  appropriate values of $\beta$ in (\ref{V}))
follows by  standard estimates for the convolutions.
\smallskip\\
Finally, we prove the decay in $\cL^1\to \cL^\infty$ norms for remainder
%The  main difficulty is to obtain the  decay estimate of   the remainder $W(t)$, 
\begin{equation*}
{\mathcal W}(t)=\frac {i}{2\pi}\int\limits_{\Gamma} \!\E^{-i\omega t}
\big((\cR_0(\omega+\!i0) \cV)^3 \cR(\omega+\!i0)\!-\!(\cR _0(\omega-\! i0)\cV)^3 \cR(\omega-\! i0)\big)\zeta(\omega^2)d\omega, 
\end{equation*}
where $\Gamma=(-\infty,-m]\cup [m,\infty)$,   and $\cR$  and
$\cR_0$ stand for the  resolvents  of the operators $\cK$ and $\cK_0$, respectively.
For the proof we use  a representation of $\cR$ via Jost solutions  and their properties 
obtained in  \cite{EKTM}.
\smallskip\\
Note that dispersion estimates of type \eqref{full}--(\ref{full-new}) play an important role in proving asymptotic
stability of solitons and of scattering asymptotics in the associated one-dimensional nonlinear Klein-Gordon equations,
see for example \cite{KK11}.

%%%%%%%%%%%%%%%%%%%%%%%%%%%%%%%%%%%%%%%%%%%%%%%%%%%%%%%%%%%%%%%%%%%%%%%%%%%%%
\setcounter{equation}{0}
\section{Free Klein-Gordon equation}\label{KG0}
%%%%%%%%%%%%%%%%%%%%%%%%%%%%%%%%%%%%%%%%%%%%%%%%%%%%%%%%%%%%%%%%%%%%%%%%%%%%%
%%%%%%%%%%%%%%%%%%%%%%%%%%%%%%%%%%%%%%%%%%%%%%%%%%%%%%%%%%%%%%%%%%%%%%%%%%%%%
Here we consider the case $V= 0$. Denote
$$
 \cK_0 = i\begin{pmatrix}
  0                          &   1\\
 \Delta-m^2  &   0
\end{pmatrix}
$$
The resolvent $\cR_0(\omega)=(\cK_0-\omega)^{-1}$ of the operator   $\cK_0$  can be expressed in terms of the resolvent  
$R_0(\omega)=(-\partial^2_x-\omega)^{-1}$ of the Schr\"odinger operator as
\[
 \cR_0(\omega)=
   \begin{pmatrix}
 0            &  0\\
  -i          &  0
\end{pmatrix}
+
\begin{pmatrix}
 \omega &  i \\
  -i\omega^2&  \omega
\end{pmatrix} R_0(\omega^2-m^2).
\]
For the dynamical group $\E^{-it\cK_0}$ of the free Klein-Gordon equation the spectral representation  holds:
\begin{eqnarray}\nonumber
\!\!\!\!\!\!\!\!\!\!\!\!&&\!\!\!\!\!\!\!\!\!\!\!\!\!\!\!\!\E^{-it\cK_0}
   =\frac 1{2\pi i}\int\limits_\Gamma  \E^{-it\omega} ( \cR_0(\omega+i0)-\cR_0(\omega-i0))\,d\omega\\
\label{Rep0}
\!\!\!\!\!\!\!\!\!\!&&\!\!\!\!\!\!\!\!\!\!\!\!\!\!=\frac 1{2\pi i}\int\limits_\Gamma \E^{-i\om t}
\begin{pmatrix}
 \omega &  i \\\label{PP1}
  -i\omega^2&  \omega
\end{pmatrix}\left(R_0((\omega+i0)^2\!-\!m^2)-\!R_0((\omega-i0)^2\!-\!m^2)\right)d\om.
\end{eqnarray}
%where $\Gamma=(-\infty,-m)\cup (m,\infty)$.
Recall, that the integral kernel of   $R_0(\omega)$ reads
$$
R_0(\omega,x,y)=-\frac{e^{i\sqrt\omega|x-y|}}{2i\sqrt\omega}.
$$
Hence,
\begin{equation}\label{K0-rep}
[\E^{-it\cK_0}](x,y)=\frac{1}{2\pi}\int_{-\infty}^{\infty}{\mathcal M}_t(k)\,\E^{i|y-x|k}\,dk,
\end{equation}
where
\begin{equation}\label{M}
{\mathcal M}_t(k)=\begin{pmatrix}
 \cos(t\sqrt{k^2 + m^2})  & \ds\frac{\sin(t\sqrt{k^2 + m^2})}{\sqrt{k^2+m^2}}\\
  -\sqrt{k^2+m^2}\sin(t\sqrt{k^2 + m^2})&  \cos(t\sqrt{k^2 + m^2})
\end{pmatrix}.
\end{equation}
%%%%%%%%%%%%%%%%%%%%%%%%%%%%%%%%%%%%%%%%%%%%%%%%%%%%%%%%%%%%%%%%%%%%%
\begin{proposition}\label{pr1} 
\begin{equation}\label{K0-decay}
\Vert  e^{-i \cK_0t} \Vert_{{\cF}_{\sigma}\to {\cF}_{-\sigma}}
\le C (1+t)^{-1/2},\quad t\ge 0,\quad \sigma>1/2.
\end{equation}
\end{proposition}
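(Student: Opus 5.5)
Split $e^{-i\cK_0 t}$ into a high-frequency and a low-frequency part with a smooth cutoff $\chi(k)$, $\chi=1$ for $|k|\le 1$ and $\chi=0$ for $|k|\ge 2$. Using \eqref{K0-rep}, the low-frequency kernel is
$$
\frac1{2\pi}\int \chi(k)\,{\mathcal M}_t(k)\,\E^{i|x-y|k}\,dk .
$$
For the high-frequency part write $\cos(t\sqrt{k^2+m^2})$ and $\sin(t\sqrt{k^2+m^2})$ as combinations of $\E^{\pm it\sqrt{k^2+m^2}}$. This reduces everything to estimating the oscillatory integrals
$$
I_\pm(t,r)=\int a(k)\,\E^{i(rk\pm t\sqrt{k^2+m^2})}\,dk,\qquad r=|x-y|,
$$
with symbols $a$ of order $0$, $-1$ or $1$. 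The target is the $\cF_\sigma\to\cF_{-\sigma}$ bound. By duality and the weights it suffices to prove kernel bounds of the form $|\text{kernel}|\le C(1+t)^{-1/2}$ for the $L^2_\sigma\to L^2_{-\sigma}$ pieces. The condition $\sigma>1/2$ is exactly what makes $\langle x\rangle^{-\sigma}\langle y\rangle^{-\sigma}$ Hilbert--Schmidt on $\R^2$, so a uniformly bounded kernel gives a bounded operator $L^2_\sigma\to L^2_{-\sigma}$. Derivatives ($H^1$ components) are handled by moving $\partial_x$ onto the data via integration by parts, or by noting that $\partial_x$ of the kernel brings a factor $ik$.

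\textbf{Low frequency.} The phase $\phi(k)=rk\pm t\sqrt{k^2+m^2}$ has $\phi''(k)=\pm t m^2(k^2+m^2)^{-3/2}$, which is bounded below by $ct$ on $|k|\le 2$. Van der Corput's lemma (second-derivative version, with the amplitude of bounded variation) then gives $|I_\pm|\le C t^{-1/2}$ uniformly in $r$. For $t\le1$ the trivial bound is enough. Every entry of ${\mathcal M}_t$ is smooth and bounded on the compact support of $\chi$, including the entry with factor $\sqrt{k^2+m^2}$, so this piece is harmless. It gives the claimed $(1+t)^{-1/2}$ uniformly in $x,y$, with the weights used only for Hilbert--Schmidt.

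\textbf{High frequency, the main obstacle.} Here $\phi''\sim t|k|^{-3}$ degenerates, and the kernel need not even be bounded: this is the non-decay of the Green function \eqref{U} along the light cone. Instead of pointwise kernel bounds I would work on the Fourier side in energy norm. The free group is unitary in $H^1\oplus L^2$, so the high-frequency part is bounded $\cF_0\to\cF_0$. It remains to extract decay from the weights.

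My plan is to use the conjugation $\langle x\rangle^{-\sigma}\E^{-i\cK_0 t}(1-\chi(D))\langle x\rangle^{-\sigma}$. In Fourier variables this is a convolution in $\xi$ of $\widehat{\langle x\rangle^{-\sigma}}$ with the multiplier $\E^{\pm it\sqrt{\xi^2+m^2}}$. The group velocity $\xi/\sqrt{\xi^2+m^2}$ is strictly less than $1$, so waves at frequency $\xi$ leave a ball of radius $R$ in time $\sim R/(1-|v|)$. The relevant estimate is a local decay bound: for $\sigma>1/2$, $\int\|\langle x\rangle^{-\sigma}\E^{-i\cK_0 t}f\|^2dt\lesssim\|f\|^2$ (Kato smoothing for $\sqrt{-\Delta+m^2}$ in 1D, whose derivative of symbol is bounded below at high frequency). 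That bound is only averaged in $t$. To get a pointwise $t^{-1/2}$ I would first try the simpler route: integrate by parts once in $k$ in the oscillatory integral. With $\phi'=r\pm tk/\sqrt{k^2+m^2}$, we have $|\phi'|\gtrsim |r\pm t|+t/k^2$ away from the cone, and integration by parts gives kernel bounds like $C\langle t-r\rangle^{-1}$ modulo a symbol loss that is absorbed by the $H^1$ norm on the data. Finally, integrate $\langle x\rangle^{-\sigma}\langle y\rangle^{-\sigma}\langle t-|x-y|\rangle^{-1}$: the cone is a thin set, and this gives an operator norm of order $(1+t)^{-1/2}$ once $\sigma>1/2$, via a Schur test where $\int\langle y\rangle^{-2\sigma}\langle t-|x-y|\rangle^{-2}dy\lesssim\langle |x|-t\rangle^{-2\sigma}$. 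Making this Schur step and the symbol bookkeeping precise is where I expect the real work.
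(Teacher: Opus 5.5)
Your low-frequency step is correct and coincides with the paper's Lemma~\ref{lem1}. Van der Corput with $|\phi''|\ge ct$ on the compact $k$-support gives kernels bounded by $Ct^{-1/2}$ uniformly in $x,y$, and $\sigma>1/2$ turns this into an $\cF_\sigma\to\cF_{-\sigma}$ bound; your Hilbert--Schmidt remark is equivalent to the embeddings $L^2_\sigma\subset L^1$ and $L^\infty\subset L^2_{-\sigma}$.

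\textbf{The high-frequency step fails.} The lower bound $|\phi'|\gtrsim|r\pm t|+t/k^2$ is false. Since $k$ runs over all of $\R$, for $0\le r<t$ the phase $rk\pm t\omega(k)$ has a stationary point where $|k_0|/\omega(k_0)=r/t$, i.e.\ $|k_0|=mr/\sqrt{t^2-r^2}$. This point lies in the high-frequency region as soon as $r/t$ is close to $1$. Stationary phase there gives a contribution of size $|a(k_0)|\,\omega(k_0)^{3/2}t^{-1/2}$:
\begin{itemize}
\item for $r=ct$ with $c<1$ close to $1$ it is $\sim t^{-1/2}$, not $t^{-1}$;
\item for $r=t-1$ it is $\sim t^{1/4}$ for order-$0$ symbols, and $\sim t^{-1/4}$ for order $-1$ (matching $\frac12\theta(t-r)J_0(m\sqrt{t^2-r^2})\sim (t(t-r))^{-1/4}$).
\end{itemize}
So the kernel bound $C\langle t-r\rangle^{-1}$ is simply unavailable. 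There is a second obstruction. The $\cos(t\omega(D))$ entry must be bounded $L^2_\sigma\to L^2_{-\sigma}$ with no derivative to spare, because on the $H^1\to H^1$ component $\partial_x$ commutes with it. Its kernel contains $\frac12\delta(x-y\pm t)$, plus a part of size $\sim t$ in a layer of width $\sim 1/t$ inside the cone. ``Symbol loss absorbed by $H^1$'' cannot rescue a pointwise-kernel/Schur argument for this entry.

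\textbf{The missing idea.} High-frequency local decay has nothing to do with the light cone. What matters is that the group velocity $\omega'(k)=k/\omega(k)$ is bounded \emph{below} in modulus on the high-frequency support. Waves leave a ball of radius $R$ in time $\sim R/|v|$, not $R/(1-|v|)$; you noted this property when invoking Kato smoothing.

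The paper's Lemma~\ref{lem2} uses it directly on the Fourier side and never looks at the kernel. In $\int\E^{i\omega(k)t}\E^{ikx}\zeta(k^2+m^2)\omega^j(k)\hat f(k)\,dk$, write $\E^{i\omega(k)t}=\frac{\omega(k)}{ikt}\partial_k\E^{i\omega(k)t}$ and integrate by parts. The $k$-derivative falls on one of three factors:
\begin{itemize}
\item on $\E^{ikx}$, producing a factor $x$ absorbed by the output weight;
\item on $\hat f$, where $\partial_k\hat f$ is the transform of $-iyf$, absorbed by the input weight via Plancherel;
\item on the smooth bounded symbol $\frac{\omega}{k}\zeta$.
\end{itemize}
Iterating gives $L^2_\ell\to L^2_{-\ell}$ bounds $Ct^{-\ell}$ for integer $\ell$. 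Stein--Weiss interpolation (Lemma~\ref{RT}) then gives $C(1+t)^{-\sigma}$ for every $\sigma>0$. The $H^1$ components reduce to the same estimate, because $\partial_x$ commutes with the multipliers and $k/\omega$, $\omega/k$ are bounded on the support. Thus the high-frequency part decays like $t^{-\sigma}$, faster than $t^{-1/2}$ when $\sigma>1/2$. The $t^{-1/2}$ rate in the proposition comes only from the low-frequency piece.
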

%%%%%%%%%%
The proposition follows from two lemmas below.
\begin{lemma}\label{lem1} 
For  any $\chi\in C_0^\infty(\R)$ %with  bounded support  
the decay holds
\begin{equation}\label{WD2}
\Vert  e^{-i \cK_0t} \chi(\cK_0^2)\Vert_{\cL^1\to \cL^\infty}
\le  {\color{red}C} (1+t)^{-1/2}, \quad t\ge 0.
\end{equation}
\end{lemma}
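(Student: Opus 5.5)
Because $\chi$ has compact support, only low frequencies $|k|\le k_0$ enter. The plan is to write the kernel explicitly via \eqref{K0-rep} and apply a one-dimensional stationary phase (van der Corput) bound uniformly in $x-y$.

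\textbf{Kernel.} Functional calculus, or \eqref{K0-rep} with a cutoff, gives
\[
[e^{-it\cK_0}\chi(\cK_0^2)](x,y)=\frac1{2\pi}\int_{\R}{\mathcal M}_t(k)\,\chi(k^2+m^2)\,e^{i(x-y)k}\,dk ,
\]
since $\cK_0^2$ acts as $(k^2+m^2)I$ in Fourier variables. Expanding $\cos$ and $\sin$ into exponentials $e^{\pm it\om(k)}$ with $\om(k)=\sqrt{k^2+m^2}$, each matrix entry is a finite sum of oscillatory integrals
\[
I_\pm(t,z)=\int_{\R} a(k)\,e^{i(zk\pm t\om(k))}\,dk,\qquad z=x-y .
\]
Here the amplitude $a(k)$ is one of $\chi(k^2+m^2)$, $\chi\,\om$, $\chi/\om$, times a bounded constant. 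Each such $a$ lies in $C_0^\infty$, with support in $|k|\le k_0$ independent of $t,z$.

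\textbf{Uniform bound.} The phase $\phi(k)=zk\pm t\om(k)$ satisfies $\phi''(k)=\pm t\,m^2(k^2+m^2)^{-3/2}$. On the support of $a$ this gives $|\phi''|\ge c\,t$ with $c=m^2(k_0^2+m^2)^{-3/2}>0$, uniformly in $z$. The van der Corput lemma with amplitude then yields
\[
|I_\pm(t,z)|\le C\,t^{-1/2}\bigl(\|a\|_{L^\infty}+\|a'\|_{L^1}\bigr),
\]
uniformly in $z\in\R$. Combined with the trivial bound $|I_\pm|\le\|a\|_{L^1}$ for $t\le1$, this gives $\sup_{x,y}|K(t,x,y)|\le C(1+t)^{-1/2}$ for every entry. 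The essential point is that the compact support keeps $\om''$ bounded away from zero. This is exactly what fails at high frequency and produces the non-decay \eqref{U}.

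\textbf{Derivatives and the $\cL^1\to\cL^\infty$ norm.} The $\cL^1$ and $\cL^\infty$ norms also involve $\partial_x$ of the first component. So I also need to bound $\partial_x K$ and $\partial_y K$ for the entries acting from or to $W^{1,p}$. This is easy because each derivative only multiplies $a(k)$ by $\pm ik$, which is again a $C_0^\infty$ amplitude, so the same van der Corput bound applies. Since every relevant kernel (including its $x$-derivatives) is bounded by $C(1+t)^{-1/2}$ uniformly in $x,y$, the estimate
\[
\|\textstyle\int K(\cdot,y)f(y)\,dy\|_{L^\infty}\le\sup|K|\,\|f\|_{L^1}
\]
gives \eqref{WD2}; here the $L^1$ norms of derivatives of the data are not even needed.

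The only step needing care is the uniformity of the van der Corput constant in $z$ (which it is, since $z$ enters only linearly in the phase), so I expect no serious obstacle.
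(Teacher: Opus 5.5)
Your proposal is correct and follows the paper's proof essentially step for step. You write the kernel with the cutoff $\chi(k^2+m^2)$, reduce the $\cL^1\to\cL^\infty$ norm to uniform bounds on the kernel entries with amplitudes $\omega^j(k)k^p\chi(k^2+m^2)$, and apply van der Corput using $\omega''(k)=m^2\omega^{-3}(k)\ge c>0$ on the compact support, uniformly in $x-y$. Your handling of $t\le 1$ by the trivial bound, and your entry-by-entry check of the derivative kernels, fill in details the paper leaves implicit.
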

\begin{lemma}\label{lem2} 
Let $\zeta$  be a smooth function such that $\zeta(\omega)=0$ for $\omega\le m^2+1$ and
$\zeta(\omega)=1$ for $\omega\geq m^2 + 2$. Then the decay holds
\begin{equation}\label{WD3}
\Vert  e^{-i \cK_0t} \zeta(\cK_0^2)\Vert_{{\cF}_{\sigma}\to {\cF}_{-\sigma}}
\le C (1+t)^{-\sigma},\quad\sigma>0,\quad t\ge 0.
\end{equation}
\end{lemma}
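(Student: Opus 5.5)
The plan is to pass to the Fourier side, where $e^{-i\cK_0 t}\zeta(\cK_0^2)$ is the matrix Fourier multiplier $\mathcal M_t(k)\zeta(k^2+m^2)$ from \eqref{M}. On the support of $\zeta(k^2+m^2)$ we have $|k|\ge 1$, so the group velocity $\omega'(k)=k/\omega(k)$, with $\omega(k)=\sqrt{k^2+m^2}$, is bounded away from zero. Non‑stationary phase in $k$ should then yield arbitrary decay in $t$, paid for with powers of the weight $\langle x\rangle$.

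\textbf{Step 1 (reduction to scalar $L^2$ estimates).} The weighted energy norm is $\Vert\langle x\rangle^{\sigma}\psi\Vert+\Vert\langle x\rangle^{\sigma}\psi'\Vert+\Vert\langle x\rangle^{\sigma}\pi\Vert$. Since $\partial_x$ commutes with Fourier multipliers, every entry of the $2\times 2$ operator, in each of the relevant combinations ($\psi$ or $\psi'$ to $\psi$ or $\psi'$, and $\psi'$ or $\pi$ to $\pi$), can be written as a scalar operator
\[
T_\pm f=\cF^{-1}\big[e^{\pm it\omega(k)}b(k)\hat f(k)\big].
\]
The symbols that occur are $\zeta\cdot 1$, $\zeta/\omega$, $\zeta\,\omega/(ik)$ and $\zeta\, ik/\omega$. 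For instance, $\omega\sin(t\omega)\hat\psi=\frac{\omega}{ik}\sin(t\omega)\widehat{\psi'}$, where $\omega/k$ is smooth on $\operatorname{supp}\zeta$. Each such $b$ is smooth with all derivatives bounded. Hence it suffices to show
\[
\Vert\langle x\rangle^{-\sigma}T_\pm\langle x\rangle^{-\sigma}\Vert_{L^2\to L^2}\le C(1+t)^{-\sigma}.
\]

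\textbf{Step 2 (integer $\sigma=N$, $t\ge1$).} By Plancherel,
\[
\langle T_\pm f,g\rangle=\int e^{\pm it\omega(k)}b(k)\hat f(k)\overline{\hat g(k)}\,dk .
\]
I would use the identity $e^{\pm it\omega}=(\pm it\omega'(k))^{-1}\partial_k e^{\pm it\omega}$ and integrate by parts $N$ times, applying the operator $L^*h=-\partial_k\big(h/(\pm it\omega')\big)$. Each step gains a factor $t^{-1}$. The $k$-derivatives fall either on symbols built from $b$ and $1/\omega'$, which are bounded with bounded derivatives because $|\omega'|\ge c>0$ and $\omega'$ is a symbol of order $0$ there, or on $\hat f$ and $\overline{\hat g}$. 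We have $\partial_k^j\hat f=\widehat{(-ix)^jf}$ with $j\le N$, and similarly for $g$. Cauchy--Schwarz then gives
\[
|\langle T_\pm f,g\rangle|\le Ct^{-N}\Vert\langle x\rangle^Nf\Vert\,\Vert\langle x\rangle^Ng\Vert .
\]
Boundary terms vanish after a standard density argument with Schwartz $f,g$. For $0\le t\le1$, boundedness of $T_\pm$ on $L^2$ is immediate since $|b|\le C$, and $\langle x\rangle^{-\sigma}\le1$.

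\textbf{Step 3 (non-integer $\sigma>0$).} For general $\sigma$, I would use Stein's complex interpolation applied to the analytic family $z\mapsto (1+t)^{z}\langle x\rangle^{-z}T_\pm\langle x\rangle^{-z}$ on the strip $0\le\operatorname{Re}z\le N$ with $N>\sigma$. On the line $\operatorname{Re} z=0$ the factors $\langle x\rangle^{-iy}$ are unitary, so the bound is uniform in $y$. On the line $\operatorname{Re}z=N$ the same holds by Step 2, with the growth in $\operatorname{Im}z$ at most that of $|(1+t)^{iy}|=1$.

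I expect the main obstacle to be the bookkeeping in Step 1. One must check that every entry of $\mathcal M_t\zeta$, including the unbounded $\omega\sin(t\omega)$ entry, becomes a zero-order symbol after the derivative structure of $H^1\oplus L^2$ is used. This works only because $\zeta$ removes a neighbourhood of $k=0$, which is exactly the stationary point of $\omega$ and the location of the edge resonance. The integration by parts itself is routine once the symbols are known to be uniformly smooth.
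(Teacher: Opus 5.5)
Your proposal is correct and follows essentially the same route as the paper. The paper also reduces to the scalar multipliers $B_0,B_{\pm1}$ with symbols $\E^{i\omega t}\zeta(k^2+m^2)\omega^{j}(k)$, using $\omega/(ik)$ on $\operatorname{supp}\zeta$ for the $\omega\sin(t\omega)$ entry; it proves the integer-weight bounds by $\ell$-fold integration by parts with $\E^{i\omega t}=\frac{\omega}{itk}\partial_k\E^{i\omega t}$ and reaches non-integer $\sigma$ by the three-lines interpolation of Lemma~\ref{RT}. The only difference is cosmetic: you integrate by parts in the bilinear form $\int \E^{\pm it\omega}b\,\hat f\,\overline{\hat g}\,dk$, whereas the paper does it in the formula for $B_j(t)f$, where the derivative of $\E^{ikx}$ produces a factor $x$ that the output weight absorbs.
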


{\bf Proof of Lemma \ref{lem1}}
\\
Similarly to (\ref{K0-rep}),
\[%\begin{equation}\label{E-rep}
[\E^{-it\cK_0} \chi(\cK_0^2)](x,y)=\frac{1}{2\pi}\int_{-\infty}^{\infty}{\mathcal M}_t(k)\,
\E^{i|y-x|k}\chi(k^2+m^2)\,dk.
\]%\end{equation}
Note, that for an integral operator $T$,
\[
\Vert T\Vert_{L^1\to L^{\infty}}=\sup\limits_{\Vert f\Vert_{L^\infty}=1,\Vert g\Vert_{L^1}=1}
\langle f, Tg\rangle= \sup\limits_{x,y}|[T](x,y)|.
\]
Hence, it suffices to prove that for $j=-1,0,1$ and $p=0,1$,
\begin{equation}\label{Free-low-deKG0cay}
\sup\limits_{x,y}|\int_{-\infty}^{\infty}e^{i\omega(k)t}\E^{i(y-x)k}\omega^j(k)k^{p}\chi(k^2+m^2)\,dk|\le Ct^{-1/2},\quad t\ge 1.
\end{equation}
Here  we denote $\omega(k):=\sqrt{k^2+m^2}$. The integral in \eqref{Free-low-deKG0cay} is 
an oscillatory integral with the  phase function $\phi(k)=\omega(k)+\frac{y-x}{t }k$, satisfying
\[
|\phi''(k)|=\frac{m^2}{\omega^3(k)}\ge C (m,\chi),
\quad (k^2 + m^2)\in{\rm supp}\,\chi.
\]
Then (\ref{WD2}) follows by the  van der Corput lemma \cite [page 332] {St}.
\\
{\bf Proof of Lemma \ref{lem2}}
\\
One has
\begin{equation}\label{E-rep1}
[\E^{-it\cK_0} \zeta(\cK_0^2)](x,y)=\frac{1}{2\pi}\int_{-\infty}^{\infty}{\mathcal M}_t(k)\,
\E^{i|y-x|k}\zeta(k^2+m^2)\,dk.
\end{equation}
Let  $B_j(t)$, $j=0,1,-1$  be  integral  operators with  integral kernels
$$
B_j(x,y,t)=B_j(x-y,t)=\int_{-\infty}^{\infty}\,\E^{i\omega(k)t}\E^{ik(x-y)}\zeta(k^2+m^2)\omega^j(k)\,dk.
$$
Thus,  (\ref{WD3})  is equivalent to
\begin{equation}\label{bs2}
\Vert B_0(t)\Vert_{L^2_{\sigma}\to L^2_{-\sigma}}
+\Vert B_1(t)\Vert_{H^1_{\sigma}\to L^2_{-\sigma}}+\Vert B_{-1}(t)\Vert_{L^2_{\sigma}\to H^1_{-\sigma}}
\le C(1+ t)^{-\sigma}\!, ~~\sigma>0
\end{equation}
by (\ref{M}) and  (\ref{E-rep1}). We also took into account that  
$\Vert B_0(t)\Vert_{H^1_{\sigma}\to H^1_{-\sigma}} =\Vert B_0(t)\Vert_{L^2_{\sigma}\to L^2_{-\sigma}}$, since
$[B_0(t)f]'(x)= [B_0(t)f'](x)$.

First we prove (\ref{bs2})  for integer  values of $\sigma$.  Namely, for $\ell=0,1,2,...$,
\begin{equation}\label{bs3}
\Vert B_0(t)\Vert_{L^2_{\ell}\to L^2_{-\ell}}
+\Vert B_1(t)\Vert_{H^1_{\ell}\to L^2_{-\ell}}+\Vert B_{-1}(t)\Vert_{L^2_{\ell}\to H^1_{-\ell}}
\le C(1+t)^{-\ell}\!,  \quad t\ge0.
\end{equation}
Denote by $\hat f$ the Fourier transform of $f$:
\[
\hat f(k)=\int \E^{-iky} f(y)dy.
\]
Then
\begin{equation}\label{Bj}
\!\!\!\!\![B_j(t)f](x)
=\int_{-\infty}^{\infty}\E^{i\omega(k)t}\,\E^{ikx}\zeta(k^2+m^2)\hat f(k)\omega^j(k)\,dk.
\end{equation}
In the case $\ell=0$, (\ref{Bj}) implies
$$
\Vert B_0(t)f\Vert_{L^2}=\Vert e^{i\omega(k)t}\zeta(k^2+m^2)\hat f(k)\Vert_{L^2}
\le C\Vert \hat f\Vert_{L^2} =C\Vert f\Vert_{L^2} 
$$
Further, using integration by parts, we obtain
\begin{eqnarray}\nonumber
[B_0(t)f](x)&\!\!=\!\!&\frac{i}{t}\int \E^{i\omega(k)t}\partial_k\Big(\E^{ikx}\,\frac{\omega(k)}{k}\zeta(k^2+m^2)\hat f(k)\Big)\,dk\\
\nonumber
&\!\!=\!\!&-\frac{x}{t}\int \E^{i\omega(k)t+ikx}\,\frac{\omega(k)}{k}\zeta(k^2+m^2)\hat f(k)\,dk\\
\nonumber
&\!\!+\!\!&\frac{i}{t}\int \E^{i\omega(k)t+ikx}\,\frac{\omega(k)}{k}\zeta(k^2+m^2)\partial_k\hat f(k)\,dk\\
\nonumber
&\!\!+\!\!&\frac{i}{t}\int \E^{i\omega(k)t+ikx}\hat f(k)\partial_k\Big(\frac{\omega(k)}{k}\zeta(k^2+m^2)\Big)\,dk\\
\label{in1}
&=&g_1(x,t)+g_2(x,t)+g_3(x,t),\quad t\ge 1.
\end{eqnarray}
Evidently,
$$
\Vert g_3(t)\Vert_{L^2}=\frac{1}{t}\Vert \E^{i\omega(k)t}\hat f(k)\partial_k\Big(\frac{\omega(k)}{k}\zeta(k^2+m^2)\Big)\Vert_{L^2}
\le \frac{C}{t}\Vert \hat f\Vert_{L^2} =\frac{C}{t}\Vert f\Vert_{L^2}. 
$$
Similarly,
$$
\Vert \langle x\rangle^{-1} g_1(x,t)\Vert_{L^2} \le \frac{C}{t}\Vert f\Vert_{L^2},\quad
\Vert g_2(x,t)\Vert_{L^2} \le \frac{C}{t}\Vert \partial_k\hat f(k)\Vert_{L^2}\le  \frac{C}{t}\Vert \langle x\rangle  f(x)\Vert_{L^2}.
$$
Hence, (\ref{bs3}) with $\ell=1$ for the first term follows.  For $\ell=2,3,...$, (\ref{bs3})  follows similarly by $\ell$- times integration by parts.
Note, that
$$
[B_{-1}(t)f]'(x) =\int_{-\infty}^{\infty}\E^{i\omega(k)t}\,\E^{ikx}\zeta(k^2+m^2)\hat f(k)\frac {ik}{\omega(k)}\,dk.
$$  
$$
[B_{1}(t)f]'(x) =\int_{-\infty}^{\infty}\E^{i\omega(k)t}\,\E^{ikx}\zeta(k^2+m^2)\widehat{ f'}(k)\frac {\omega(k)}{ik}\,dk.
$$   
Hence, the  remaining terms  in the left hand side of  (\ref{bs3})  are evaluated similarly. 
Finally,  (\ref{bs3})  implies  (\ref{bs2}) by a suitable version of Stein-Weiss interpolation theorem: 
\begin{lemma}\label {RT} (cf. \cite[Theorem 5.4.1]{BL}).
Let $\sigma_0\ne \sigma_1$, and  let
$T: L^2_{\sigma_j}\to L^2_{-\sigma_j}$   with the norm $M_j=\sup\{\langle Tf,g\rangle: \Vert f\Vert_{L^2_{\sigma_j}}=\Vert g\Vert_{L^2_{\sigma_j}}=1\}$, $j=0,1$.
Then for $\sigma=\sigma(\theta)=(1-\theta)\sigma_0+\theta\sigma_1$, $\theta\in(0,1)$,  one has
$$
T: L^2_{\sigma}\to L^2_{-\sigma}  ~~{\rm with}~{\rm the}~{\rm norm}~~ M\le M_0^{1-\theta}M_1^{\theta}.
$$
\end{lemma}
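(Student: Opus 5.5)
This is the Stein--Weiss interpolation with change of measure, applied to the weighted spaces $L^2_{\sigma}=L^2(\R,\langle x\rangle^{2\sigma}dx)$. I would reduce it to complex interpolation of a bilinear form, following the pattern of \cite[Theorem 5.4.1]{BL}.

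First, by duality, $\Vert T\Vert_{L^2_{\sigma}\to L^2_{-\sigma}}$ equals $\sup|\langle Tf,g\rangle|$ over $\Vert f\Vert_{L^2_\sigma}=\Vert g\Vert_{L^2_\sigma}=1$. The dual of $L^2_{-\sigma}$ under the unweighted pairing is $L^2_{\sigma}$, and this is how $M_j$ is defined in the statement.

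Second, I conjugate by weights. Set $\tilde T_\sigma=\langle x\rangle^{-\sigma}T\langle x\rangle^{-\sigma}$, so that $\Vert T\Vert_{L^2_\sigma\to L^2_{-\sigma}}=\Vert\tilde T_\sigma\Vert_{L^2\to L^2}$. Fix $f,g$ simple (finite linear combinations of indicators of bounded sets) with $\Vert f\Vert_{L^2}=\Vert g\Vert_{L^2}=1$, and write $f=|f|e^{i\alpha}$, $g=|g|e^{i\gamma}$. On the strip $0\le \mathrm{Re}\, z\le 1$ define
\[
F(z)=\big\langle T\,\langle x\rangle^{-\sigma(z)}f,\ \langle x\rangle^{-\sigma(z)}g\big\rangle,\qquad \sigma(z)=(1-z)\sigma_0+z\sigma_1 .
\]
The pairing should be taken bilinear, or with $\bar g$ inserted, so that $F$ is holomorphic. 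Since $f$ and $g$ are simple with bounded support, $\langle x\rangle^{-\sigma(z)}f$ depends analytically on $z$ as an element of $L^2_{\max\sigma_j}$, with bounded support. Hence $F$ is holomorphic in the open strip, continuous, and bounded on the closed strip, because $T$ is bounded on compactly supported $L^2$ functions (take $T$ bounded $L^2_{\max\sigma_j}\to L^2_{-\max\sigma_j}$).

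Third, I check the boundary values. On $\mathrm{Re}\, z=j$ we have $|\langle x\rangle^{-\sigma(z)}f|=\langle x\rangle^{-\sigma_j}|f|$. Therefore $\Vert\langle x\rangle^{-\sigma(z)}f\Vert_{L^2_{\sigma_j}}=\Vert f\Vert_{L^2}=1$, and the same holds for $g$. This gives $|F(j+is)|\le M_j$. By the Hadamard three lines lemma, $|F(\theta)|\le M_0^{1-\theta}M_1^\theta$. At $z=\theta$ we get $\sigma(\theta)=\sigma$, so $F(\theta)=\langle \tilde T_\sigma f,g\rangle$. Density of simple functions in $L^2$ then gives $M\le M_0^{1-\theta}M_1^\theta$.

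The only delicate step is justifying that $F$ is bounded and holomorphic. That is why I restrict to simple functions with bounded support, where the weights $\langle x\rangle^{-\sigma(z)}$ are bounded and analytic in $z$ uniformly in the strip. The same argument applies verbatim to the $H^1$-type norms used in (\ref{bs2}), after transferring the derivative via the identities for $[B_{\pm1}f]'$ derived above.
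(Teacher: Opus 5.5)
Your proof is correct and is essentially the paper's argument: the appendix applies the three lines lemma to $F(z)=\langle T\varphi(z),\psi(z)\rangle$, where $\varphi,\psi$ are simple functions normalized in $L^2_\sigma$ and multiplied by $\langle x\rangle^{\pm(\sigma-\sigma(z))}$. Writing those functions as $\langle x\rangle^{-\sigma}$ times your $L^2$-normalized $f,g$ turns them exactly into your $\langle x\rangle^{-\sigma(z)}f$ and $\langle x\rangle^{-\sigma(z)}g$. Your version also states explicitly several points the appendix leaves implicit (and partly garbles with sign slips): the bilinear pairing needed for holomorphy, the boundedness of $F$ on the closed strip, and the final density step.
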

For the convenience of readers we give the  proof in the Appendix A.
%%%%%%%%%%%%%%%%%%%%%%%%%%%%%%%%%%%%%%%%%%%%%%%%%%%%%%%%%%%%%%%%%%%%%%%%%%%%%%%%
\section{ Scattering properties of Schr\"odinger operator}\label{scat}
%%%%%%%%%%%%%%%%%%%%%%%%%%%%%%%%%%%%%%%%%%%%%%%%%%%%%%%%%%%%%%%%%%%%%%%%%%%%%%%%
Next we recall a few facts from scattering theory \cite{DT}, \cite{EKTM} of the Schr\"odinger operator $H=-(\partial^2_x+V)$.
Under the assumption
$V\in L^1_1$ there exist Jost solutions $f_\pm(x,k)=\E^{\pm ikx} h_\pm(x,k)$ of
\[
Hf=k^2f,\quad  k\in \overline{\C_+},
\]
normalized according to
\[
h_\pm(x,k)\sim  1,\quad x\to \pm \infty.
\]
Denote by  $\mathcal{A}$ the Banach algebra  of Fourier transforms of integrable functions
\[
\mathcal{A} = \left\{f(k):\,
f(k) = \int_\R \E^{ik p}\hat{f}(p)dp, \,\hat{f}(\cdot)\in L^1(\R) \right\}
\]
with the norm $\|f\|_{\mathcal{A}}= \|\hat{f}\|_{L^1}$, and by 
$\mathcal{A}_1$  the corresponding unital Banach algebra 
\[
\mathcal{A}_1 = \left\{f(k):\,
f(k) =c+ \int_\R \E^{ik p}\hat{g}(p)dp, \,\hat{g}(\cdot)\in L^1(\R),\,c\in\C\right\}
\]
with the norm $\|f\|_{\mathcal{A}_1}= |c|+\|\hat{g}\|_{L^1}$.
Evidently,  $\mathcal{A}$ is a subalgebra of $\mathcal{A}_1$.
Then
\begin{equation} \label{alg1}
h_\pm(x,\cdot) - 1,~~ h'_\pm(x,\cdot)\in\mathcal A,
\quad \forall x\in\R.
\end{equation}
%\begin{equation}\label{est31}
%\sup\limits_{k} |h_\pm(x,k)|\leq C_\pm,\quad\mbox{for}\quad\pm x\geq 0.
%\end{equation}
Let
\[
W(\varphi(x,k),\psi(x,k))=\varphi(x,k)\psi'(x,k)-\varphi'(x,k)\psi(x,k)
\]
be the usual Wronskian, and set
\[
W(k)=W(f_-(x,k),f_+(x,k)),\qquad W_\pm(k)=W(f_{\mp}(x,k),f_{\pm}(x,- k)).
\]
\[
T(k)= \frac{2i k}{W(k)},\quad R_\pm(k)= \mp\frac{W_\pm(k)}{W(k)},
\]
Recall that $T$ and $R_\pm$ are the entries of the scattering matrix, that is, the transmission and reflection coefficients.
%%%%%%%%%%%%%%%%%%%%%%%%%%%%%%%%%%%%%%%%%%%%%%%%%%%%%%%%%%%%%%%%%%%%%%%%%%%%%
For $V\in L^1_1$,  
$$
T(k)-1\in\mathcal{A},\qquad R_\pm(k)\in\mathcal{A}
$$
by  \cite[Theorem 2.1]{EKTM}.  Introduce the function
\begin{equation}\label{psi}
\psi(x,y,k)= h_+(y,k)h_-(x,k) T(k), \qquad y\ge x,
\end{equation}
and $\psi(x,y,k)=\psi(y,x,k)$ for $y<x$.  
Then the kernel
of the resolvent $R(\omega)=(H-\omega)^{-1}$ for $\omega=k^2\pm i0$, $k>0$, can be expressed as 
\begin{equation}\label{resolvent}
[R(k^2\pm i0)](x,y) %= -E-rep \frac{f_+(y,\pm k) f_-(x,\pm k)}{W(\pm k)}
=\mp\frac{\E^{\pm ik(y-x)}\psi(x,y,\pm k)}{2ik}.
\end{equation}
%%%%%%%%%%%%%%%
\begin{lemma}\label{lemconst}(cf. \cite[Lemma 2.3]{EKTM}) Let $V\in L^1_1$.
Then  $\psi(x,y,\cdot)\in\mathcal{A}_1$, and the following estimate holds
\begin{equation}\label{thh}
\| \psi(x,y, \cdot)\|_{\mathcal{A}_1} \le C,
\end{equation}
with some constant $C$, which does not depend on $x$ and $y$.
\end{lemma}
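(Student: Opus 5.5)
The plan is to reduce the bound for $\psi(x,y,\cdot)$ to uniform bounds on its three factors, using that $\mathcal{A}_1$ is a Banach algebra with $\|fg\|_{\mathcal{A}_1}\le \|f\|_{\mathcal{A}_1}\|g\|_{\mathcal{A}_1}$. By the symmetry $\psi(x,y,k)=\psi(y,x,k)$ we may assume $y\ge x$, so $\psi(x,y,k)=h_+(y,k)h_-(x,k)T(k)$. Since $T-1\in\mathcal A$ by \cite[Theorem 2.1]{EKTM}, $T\in\mathcal A_1$ with a norm independent of $x,y$. The whole estimate \eqref{thh} then reduces to bounding $\|h_+(y,\cdot)\|_{\mathcal A_1}$ and $\|h_-(x,\cdot)\|_{\mathcal A_1}$ uniformly, with care in the region where such bounds fail.

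\emph{Case $y\ge 0$ and $x\le 0$.} Here I would use the Marchenko representation
\[
h_\pm(x,k)=1\pm\int_0^{\infty}B_\pm(x,p)\E^{\pm 2ikp}\,dp
\]
together with the standard estimate
\[
|B_\pm(x,p)|\le C\,\eta_\pm(x+p),\qquad \eta_\pm(x)=\int_x^{\pm\infty}|V|,
\]
see \cite{DT}. This gives $\|h_\pm(x,\cdot)-1\|_{\mathcal A}\le C\int_0^\infty \eta_\pm(x\pm p)\,dp\le C\|V\|_{L^1_1}$ for $\pm x\ge0$, uniformly. Hence $\|\psi(x,y,\cdot)\|_{\mathcal A_1}\le C$ in this case by the algebra property.

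\emph{Case $0\le x\le y$ (the case $x\le y\le 0$ is symmetric).} Here $h_-(x,k)$ is evaluated on the wrong half-line and grows in $x$, so the direct bound fails. I would rewrite $f_-$ via the scattering relation
\[
T(k)f_-(x,k)=R_+(k)f_+(x,k)+f_+(x,-k),
\]
which gives
\[
T(k)h_-(x,k)=R_+(k)\E^{2ikx}h_+(x,k)+h_+(x,-k).
\]
Therefore
\[
\psi(x,y,k)=h_+(y,k)\bigl(R_+(k)\E^{2ikx}h_+(x,k)+h_+(x,-k)\bigr).
\]
All factors now involve $h_+$ at points $\ge0$, which are uniformly bounded in $\mathcal A_1$ by the first case. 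Also $R_+\in\mathcal A$, and multiplication by $\E^{2ikx}$ is a translation of the Fourier variable, which preserves the $\mathcal A$ norm. This yields a uniform bound again.

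The main obstacle is the uniform $\mathcal A$ bound for $h_\pm-1$ on the good half-line, which requires the kernel estimate for $B_\pm$ and the $L^1_1$ integrability. The rest is algebra, and one only needs to keep track of which half-line each Jost factor is evaluated on.
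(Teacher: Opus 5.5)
Your proposal is correct and follows essentially the argument of the source the paper cites for this lemma, \cite[Lemma 2.3]{EKTM}; the paper itself gives no proof. That argument combines uniform $\mathcal A$-bounds for $h_\pm(x,\cdot)-1$ on the half-lines $\pm x\ge 0$ (from the Marchenko kernel estimate), the Wiener property $T-1,R_\pm\in\mathcal A$, and the scattering relation $T(k)h_-(x,k)=R_+(k)\E^{2ikx}h_+(x,k)+h_+(x,-k)$ (or its mirror image with $R_-$) to move the badly placed Jost factor onto its good half-line. There is one harmless notational slip: for $h_-$ the integral should read $-\int_0^{-\infty}B_-(x,p)\E^{-2ikp}\,dp$ with $\eta_-(x)=\int_{-\infty}^x|V|$, but this does not affect your bound, since the $\mathcal A$-norm is just $\int|B_\pm(x,p)|\,dp$.
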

%%%%%%%%%%%%%%%
Finally, in \cite{EKTM} has been proved that for $V\in L^1_2$,
\begin{equation}\label{psi-est}
\Vert \xi(k)\partial_k\psi(x,y,k)\Vert_{\mathcal A}\le C(1+|x|)(1+|y|),
\end{equation}
where $\xi(k)$ is a smooth function such that $\xi(k)=0$ for $|k|\le 1$ and
$\xi(k)=1$ for $|k\ge 2$.
%%%%%%%%%%%%%%%%%%%%%%%%%%%%%%%%%%%%%%%%%%%%%%%%%%%%%%%%%%%%%%%%%%%%%%%%%%%%%%%%
\section{Perturbed Klein-Gordon equation}\label{KGP}
%%%%%%%%%%%%%%%%%%%%%%%%%%%%%%%%%%%%%%%%%%%%%%%%%%%%%%%%%%%%%%%%%%%%%%%%%%%%%%%%
The resolvent $\cR(\omega)$ of the operator \eqref{H} associated with the Klein-Gordon equation \eqref{KGE} can be expressed 
in terms of the resolvent   $R(\omega)$ of the Schr\"odinger operator $H$ as
\begin{equation}\label{R}
 \cR(\omega)=
   \begin{pmatrix}
 0            &  0\\
  -i          &  0
\end{pmatrix}
+
\begin{pmatrix}
 \omega &  i \\
  -i \omega^2&  \omega
\end{pmatrix} R(\omega^2-m^2).
\end{equation}
For the one-parameter group $\E^{-it\cK}$ of \eqref{KGEv} and for any $\chi \in C_0^\infty$ 
the spectral representations of type \eqref{Rep0} -\eqref{K0-rep} hold (cf. \cite [Formula (5.4)]{EKTM}):
\begin{eqnarray}\nonumber
\E^{- it\cK}P_c\,\chi (\cK^2)\, 
&=&\frac 1{2\pi i}\int\limits_\Gamma  \E^{-it\omega}  ( \cR(\omega+i0)- \cR(\omega-i0))\chi(\omega^2)\,d\omega\\
\label{E-rep}
&=&\frac{1}{2\pi}\int\limits_{\R}{\mathcal M}_t(k)\,\E^{i|y-x|k}\psi(x,y,k) \chi(k^2+m^2)dk,
\end{eqnarray}
where the functions ${\mathcal M}_t(k)$ and $\psi(x,y,k)$ are defined by \eqref{M} and \eqref{psi}.
%%%%%%%%%%%%%%%%%%%%%%%%%%%
Representation (\ref{E-rep}), Lemma \ref{thh}, and the appropriate version of the  van der Corput lemma 
\cite[Lemma 5.4]{EKTM} imply   
%%%%%%%%%%%%%%%%%%%%%%%%%%%%%%%%%%%%%%%%%%%%%%%%%%%%%%%%%%%%%%%%%%%%%%%%%%%%%%%%
\begin{theorem}\label{thKG1}  
i) Assume $V\in L^1_1$. Then for any smooth function $\chi$  with bounded support the following decay holds
\[
\big\| \E^{-it\cK}P_c\,\chi(\cK^2)\big\|_{\cL^1\to \cL^\infty}
={\mathcal O}(t^{-1/2}),\quad t\to \infty.
\]
ii)
Assume $V\in L^1_2$. Then in non-resonant case  the following decay holds
\[
\big\| \E^{-it\cK}P_c\,\chi(\cK^2)\big\|_{ \cL^1_1\to \cL^\infty_{-1}}
={\mathcal O}(t^{-3/2}),\quad t\to \infty.
\]
\end{theorem}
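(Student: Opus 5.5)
Starting from the representation \eqref{E-rep}, the kernel of $\E^{-it\cK}P_c\chi(\cK^2)$ is
\[
\frac{1}{2\pi}\int_{\R}{\mathcal M}_t(k)\,\E^{i|y-x|k}\psi(x,y,k)\chi(k^2+m^2)\,dk .
\]
As in the proof of Lemma \ref{lem1}, the $\cL^1\to\cL^\infty$ norm (and the weighted version) is controlled by the supremum over $x,y$ of the kernel entries. The entries also include their $x$- and $y$-derivatives, because of the $W^{1,1}$ and $W^{1,\infty}$ components. Writing the cosines and sines of ${\mathcal M}_t$ as exponentials, I reduce to oscillatory integrals
\[
I(t,x,y)=\int_\R \E^{\pm i\omega(k)t+i|x-y|k}\,\omega^j(k)\,k^p\,\Phi(x,y,k)\,\chi(k^2+m^2)\,dk ,
\]
with $j\in\{-1,0,1\}$ and $p\in\{0,1\}$. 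Here $\Phi$ is $\psi$ or one of its $x,y$-derivatives. Differentiating $\E^{ik|x-y|}$ produces the factor $k$. Differentiating $h_\pm$ gives $h'_\pm\in\mathcal A$ by \eqref{alg1}, while $T\in\mathcal A_1$.

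For i), I use the compact support of $\chi$: the phase $\phi(k)=\pm\omega(k)+\frac{|x-y|}{t}k$ satisfies $|\phi''|\ge c>0$ on the support. The version of van der Corput's lemma with amplitude in the Wiener algebra (\cite[Lemma 5.4]{EKTM}) gives
\[
|I|\le Ct^{-1/2}\,\|\omega^jk^p\chi\,\Phi\|_{\mathcal A_1}.
\]
I expand the amplitude as $\Phi(k)=c+\int \E^{ikp}\hat g(p)\,dp$. For each fixed shift $p$ the phase is only translated linearly, so $\phi''$ is unchanged and van der Corput applies uniformly in $p$; integrating over $p$ gives the $\mathcal A_1$ norm. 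By Lemma \ref{lemconst} this norm is bounded uniformly in $x,y$. For the derivative entries I need the analogous uniform bound on the $\mathcal A$-norms of $h'_\pm T$ combinations. I would take this from \eqref{alg1} and the estimates in \cite{EKTM}, with $\sup_x\|h'_\pm(x,\cdot)\|_{\mathcal A}<\infty$ following from the Marchenko-type representation of $h_\pm$ for $V\in L^1_1$.

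For ii), I integrate by parts once in $k$ before applying van der Corput. The extra $t^{-1}$ comes from writing
\[
\E^{\pm i\omega t}=\frac{\omega(k)}{\pm ikt}\,\partial_k \E^{\pm i\omega t}.
\]
This creates a singularity at $k=0$, which is handled as follows.

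\emph{The $k=0$ singularity.} In the non-resonant case $W(0)\ne0$, so $T(k)=2ik/W(k)$ vanishes at $k=0$. Then $\psi(x,y,k)/k$ lies in $\mathcal A_1$ locally, with a bound I expect to grow like $(1+|x|)(1+|y|)$.

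\emph{The boundary-free integration by parts.} After integrating by parts, the derivative falls on three kinds of factors:
\begin{itemize}[leftmargin=*]
\item on $\E^{i|x-y|k}$, it produces a factor $|x-y|$;
\item on $\psi$, it produces $\partial_k\psi$, whose $\mathcal A$-norm is $O((1+|x|)(1+|y|))$ by \eqref{psi-est} and its low-energy analogue from \cite{EKTM}, valid for $V\in L^1_2$;
\item on the smooth cutoffs, it is harmless.
\end{itemize}
Each of these terms is then a van der Corput integral of the type treated in i), giving an additional $t^{-1/2}$. In total this yields $t^{-3/2}$ times weights $\langle x\rangle\langle y\rangle$, which is exactly $\cL^1_1\to\cL^\infty_{-1}$.

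The main obstacle is the low-energy control near $k=0$: proving that $\partial_k(\psi/k)$, or equivalently $\partial_k\psi$ near zero in the non-resonant case, lies in $\mathcal A$ with weight $(1+|x|)(1+|y|)$, uniformly on the support of $\chi$. I would first try to deduce this from the representations $h_\pm(x,k)-1=\int \E^{ikp}B_\pm(x,p)\,dp$ with $|B_\pm(x,p)|$ and $p\,B_\pm$ controlled by $\langle x\rangle\int_{x+p}^\infty|V|$. I would combine this with Wiener's lemma applied to $W(k)$, which is nonvanishing on the support of $\chi$.
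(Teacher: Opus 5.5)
Part (i) of your proposal follows the paper's argument. The paper uses the kernel from \eqref{E-rep}, the uniform $\mathcal A_1$ bound of Lemma~\ref{lemconst}, and the Wiener-algebra van der Corput lemma of \cite{EKTM}, with $|\phi''|=m^2/\omega^3$ bounded below on the support of $\chi$. You also spell out the $x$-derivatives needed for the $W^{1,\infty}$ component, which the paper leaves to \cite{EKTM}. For (ii), your strategy is the standard one, and it is what the paper delegates to \cite{EKTM}: integrate by parts once using $T(0)=0$, then apply van der Corput. However, two steps of your bookkeeping do not work as written.

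\emph{The weights.} After the integration by parts, the term where $\partial_k$ hits $\E^{ik|x-y|}$ has amplitude $|x-y|\,\psi(x,y,k)/k$, times smooth cutoffs. You expect $\|\chi\,\psi(x,y,\cdot)/k\|_{\mathcal A_1}$ to be $O(\langle x\rangle\langle y\rangle)$. If that were all you had, this term would be $O(|x-y|\langle x\rangle\langle y\rangle)$, and you would end with $\cL^1_2\to\cL^\infty_{-2}$, not $\cL^1_1\to\cL^\infty_{-1}$. What you need is one-sided growth. For $x\le y$ one has $\psi/k=2i\,h_+(y,k)h_-(x,k)/W(k)$. Here $h_+(y,\cdot)$ is bounded in $\mathcal A_1$ for $y\ge 0$ and grows like $\langle y\rangle$ for $y\le 0$, and symmetrically for $h_-$. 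Hence the norm is $\lesssim(1+\max(x,0))(1+\max(-y,0))$. Multiplying by $|x-y|$ then gives $\lesssim\langle x\rangle\langle y\rangle$ in each of the cases $x\le y\le 0$, $x\le 0\le y$, $0\le x\le y$.

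\emph{The derivative term.} Controlling $\partial_k(\psi/k)$ is not ``equivalent'' to controlling $\partial_k\psi$ near $k=0$. From $\psi(x,y,0)=0$ and $\partial_k\psi\in\mathcal A$ you only get $\psi/k=\int_0^1\partial_k\psi(x,y,sk)\,ds\in\mathcal A$. By contrast, $\partial_k(\psi/k)=\int_0^1 s\,\partial_k^2\psi(x,y,sk)\,ds$ needs second-order information. Also, \eqref{psi-est} is a high-energy bound, since its cutoff $\xi$ vanishes for $|k|\le 1$, so it says nothing on the support of $\chi$. The estimate must come from the factorization $\psi/k=2i\,h_+h_-/W$. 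Here $\chi/W\in\mathcal A$ by Wiener's lemma, since $W(0)\ne0$, and you need weighted bounds on $\partial_kh_\pm$ and $\partial_kW$ for $V\in L^1_2$; this is the route in your last paragraph. These weights again have to be tracked by the signs of $x$ and $y$. For example, $\partial_kh_+(y,k)$ contains the term $-iy\,\E^{-iky}f_+(y,k)$, and $f_+(y,0)$ grows linearly as $y\to-\infty$ in the non-resonant case. This quadratic growth is admissible only because $y^2\le|x||y|$ when $x\le y\le 0$. These are exactly the estimates the paper imports from \cite{EKTM}; with them, your outline gives (ii).
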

The detailed proof of the theorem can be found in \cite{EKTM}. The theorem immediately implies
\begin{theorem}\label{thKG2}  
i) Assume $V\in L^1_1$. Then for any smooth function $\chi$ with bounded support and any $\sigma>1/2$ the following decay holds
\[
\big\| \E^{-it\cK}P_c\,\chi(\cK^2)\big\|_{\cF_\sigma\to \cF_{-\sigma}}
={\mathcal O}(t^{-1/2}),\quad t\to \infty.
\]
ii)
Assume $V\in L^1_2$. Then in non-resonant case  the following decay holds
\[
\big\| \E^{-it\cK}P_c\,\chi(\cK^2)\big\|_{\cF_\sigma\to \cF_{-\sigma}}
={\mathcal O}(t^{-3/2}),\quad t\to \infty,\quad \sigma>3/2.
\]
\end{theorem}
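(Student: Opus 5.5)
The statement is Theorem \ref{thKG2}. The plan is to deduce it from Theorem \ref{thKG1} by embedding weighted energy spaces into the $\cL^1$/$\cL^\infty$ type spaces. The key observation is that $\cF_\sigma \subset \cL^1$ whenever $\sigma>1/2$. Indeed, by Cauchy--Schwarz,
\[
\Vert \psi\Vert_{L^1}=\Vert \langle x\rangle^{-\sigma}\langle x\rangle^{\sigma}\psi\Vert_{L^1}\le \Vert\langle x\rangle^{-\sigma}\Vert_{L^2}\Vert \psi\Vert_{L^2_\sigma},
\]
and $\langle x\rangle^{-\sigma}\in L^2(\R)$ exactly when $\sigma>1/2$. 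Applying this to $\psi$ and $\psi'$ gives $H^1_\sigma\subset W^{1,1}$, and applying it to the second component gives $L^2_\sigma\subset L^1$. So $\Vert\Psi\Vert_{\cL^1}\le C\Vert\Psi\Vert_{\cF_\sigma}$.

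Dually, $\cL^\infty\subset\cF_{-\sigma}$ for $\sigma>1/2$, since $\Vert\langle x\rangle^{-\sigma}\psi\Vert_{L^2}\le \Vert\langle x\rangle^{-\sigma}\Vert_{L^2}\Vert\psi\Vert_{L^\infty}$, and the same holds for $\psi'$ and for the second component. Composing the two embeddings with part i) of Theorem \ref{thKG1} yields
\[
\Vert \E^{-it\cK}P_c\chi(\cK^2)\Vert_{\cF_\sigma\to\cF_{-\sigma}}\le C\,\Vert \E^{-it\cK}P_c\chi(\cK^2)\Vert_{\cL^1\to\cL^\infty}=\mathcal O(t^{-1/2}),
\]
which is part i).

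For part ii) the same argument works with weights shifted by one. The goal is $\cF_\sigma\subset\cL^1_1$ and $\cL^\infty_{-1}\subset\cF_{-\sigma}$ for $\sigma>3/2$. The first follows from
\[
\Vert\langle x\rangle\psi\Vert_{L^1}\le\Vert\langle x\rangle^{1-\sigma}\Vert_{L^2}\Vert\psi\Vert_{L^2_\sigma},
\]
where $\langle x\rangle^{1-\sigma}\in L^2$ exactly when $\sigma>3/2$. The second follows from
\[
\Vert\langle x\rangle^{-\sigma}\psi\Vert_{L^2}\le\Vert\langle x\rangle^{1-\sigma}\Vert_{L^2}\Vert\langle x\rangle^{-1}\psi\Vert_{L^\infty}.
\]
Both are applied componentwise, including to derivatives. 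Combining these embeddings with part ii) of Theorem \ref{thKG1} gives the $\mathcal O(t^{-3/2})$ bound.

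There is no genuine obstacle. The only points to check are that the derivative components of the $\cF$ and $\cL$ norms are weighted in the same way, which the definitions guarantee, and that the thresholds $\sigma>1/2$ and $\sigma>3/2$ are precisely the conditions for square integrability of $\langle x\rangle^{-\sigma}$ and $\langle x\rangle^{1-\sigma}$.
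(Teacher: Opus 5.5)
Your proposal is correct and is essentially the paper's argument, which simply says that Theorem \ref{thKG1} ``immediately implies'' this result. You compose Theorem \ref{thKG1} with the Cauchy--Schwarz embeddings $\cF_\sigma\subset\cL^1$ and $\cL^\infty\subset\cF_{-\sigma}$ for $\sigma>1/2$, and $\cF_\sigma\subset\cL^1_1$ and $\cL^\infty_{-1}\subset\cF_{-\sigma}$ for $\sigma>3/2$, which writes out the details the paper leaves implicit.
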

Now we prove a high energy decay.  
\begin{theorem}\label{thKG3}
i) Let condition \eqref{V} holds with some $\beta>2$ and let  $\zeta(x)$ be a smooth function such that $\zeta(x)=0$ for $x\le m^2+1$ and
$\zeta(x)=1$ for $x\geq m^2 + 2$.
Then  for any $\sigma>3/4$, the decay holds  
\begin{equation}\label{b1}
\big\|\E^{-it\cK}\,\zeta(\cK^2)\big\|_{{\mathcal F}_\sigma\to {\mathcal F}_{-\sigma}}
={\mathcal O}(t^{-1/2}),\quad t\to \infty.
\end{equation}
ii)
Let condition \eqref{V} holds with some $\beta>3$.  Then for any $\sigma>3/2$,
\begin{equation}\label{b2}
\big\|\E^{-it\cK}\,\zeta(\cK^2)\big\|_{{\mathcal F}_\sigma\to {\mathcal F}_{-\sigma}}
={\mathcal O}(t^{-3/2}),\quad t\to \infty.
\end{equation}
\end{theorem}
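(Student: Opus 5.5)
We plan to use the Born expansion from the introduction, with $\cU(t)=\E^{-it\cK}\zeta(\cK^2)$, $\cU_0(t)=\E^{-it\cK_0}\zeta(\cK_0^2)$ and $\cV=\begin{pmatrix}0&0\\ iV&0\end{pmatrix}$, so that $\cK=\cK_0+\cV$:
\[
\cU(t)=\cU_0(t)-i\int_0^t\cU_0(t-s)\cV\cU_0(s)\,ds-\int_0^t\cU_0(t-s)\cV\int_0^s\cU_0(s-u)\cV\cU_0(u)\,du\,ds+{\mathcal W}(t).
\]
Strictly speaking, the Duhamel formula is for $\E^{-it\cK}$ itself, and the cutoff $\zeta$ does not commute with $\cV$. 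We would therefore first derive the expansion on the resolvent level. We iterate $\cR=\cR_0-\cR_0\cV\cR$ three times, insert the result into the spectral representation, and multiply by $\zeta(\omega^2)$. We then check that the commutator errors $\zeta(\cK^2)-\zeta(\cK_0^2)$ have the same structure. Alternatively, we accept the expansion as written in the introduction.

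\textbf{The Born terms.} We estimate each of the first three terms in $\cF_\sigma\to\cF_{-\sigma}$.

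The first term is covered by Lemma~\ref{lem2}, which gives $(1+t)^{-\sigma}$. For $\sigma>3/4$ this is $O(t^{-1/2})$, and for $\sigma>3/2$ it is $O(t^{-3/2})$.

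For the second term, observe that $\cV$ maps $\cF_{-\sigma}$ to $\cF_{\sigma'}$ whenever $\sigma+\sigma'\le\beta$, since multiplication by $V$ sends $L^2_{-\sigma}$ (first component) into $L^2_{\sigma'}$. We write $\cU_0(t-s)\cV\cU_0(s)$ as $\cF_\sigma\to\cF_{-\sigma_1}\to\cF_{\sigma_1}\to\cF_{-\sigma}$, choosing $\sigma_1$ with $2\sigma_1\le\beta$. Lemma~\ref{lem2} then gives the bound $(1+|t-s|)^{-a}(1+s)^{-b}$ with $a=\min(\sigma,\sigma_1)$ and $b=\min(\sigma,\sigma_1)$. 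The convolution $\int_0^t(1+t-s)^{-a}(1+s)^{-b}ds$ is $O(t^{-\min(a,b)})$ provided $\max(a,b)>1$.

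\textbf{Choice of weights.} For (i), with $\beta>2$ we take $\sigma_1>1$ and $2\sigma_1<\beta$, which yields a decay of order $t^{-\min(\sigma,1)}\lesssim t^{-1/2}$ once $\sigma\ge 1/2$. To reach the range $\sigma>3/4$ with the Lemma~\ref{lem2} exponent, we allow asymmetric weights, e.g.\ $\cF_\sigma\to\cF_{-\sigma_1}$ and $\cF_{\sigma_1}\to\cF_{-\sigma}$. For (ii), with $\beta>3$ we take $\sigma_1\in(3/2,\beta/2)$, giving $t^{-\min(\sigma,\sigma_1)}=O(t^{-3/2})$ for $\sigma>3/2$. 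The third term is handled the same way, as an iterated convolution of three kernels.

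\textbf{The remainder.} We expect $\mathcal W(t)$ to be the main obstacle. The plan is to compute its integral kernel and show that it is $O(t^{-1/2})$, respectively $O(t^{-3/2})$, in $\cL^1_{\sigma'}\to\cL^\infty_{-\sigma'}$. Because the $\cL$-spaces embed in the $\cF$-spaces with weights costing $\langle x\rangle^{1/2+}$, this gives the $\cF_\sigma\to\cF_{-\sigma}$ bound.

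The kernel of $(\cR_0\cV)^3\cR$ is an iterated integral over $x_1,x_2,x_3$. It contains free kernels $\E^{ik|x_j-x_{j+1}|}/k$ (times matrix entries polynomial in $\omega$) and the perturbed kernel $\E^{ik|x_3-y|}\psi(x_3,y,k)/k$ from \eqref{resolvent}. The three factors $1/k\sim1/\omega$ supply the integrability in $\omega$ that is missing for the free group; this is the reason for taking three Born terms. After changing variables to $k$, we integrate by parts once in $k$ for (i), or use van der Corput with the $\mathcal A_1$ bound of Lemma~\ref{lemconst}, with the phase $\omega(k)t+k(|x-x_1|+\dots+|x_3-y|)$. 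For (ii) we integrate by parts twice, or once plus a stationary-phase gain. Here the derivatives fall on $\psi$, controlled by \eqref{psi-est} with growth $(1+|x_3|)(1+|y|)$, and on the phase, producing factors $|x_j-x_{j+1}|$ that are absorbed by $V(x_j)\langle x_j\rangle^{\cdot}$ since $\beta>2$ (resp.\ $\beta>3$).

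The delicate points are twofold. First, we must avoid the endpoint $k=0$, which is automatic because $\zeta$ vanishes near $\omega=m$. Second, we must make sure that each $k$-derivative of $\psi$ costs at most one power of $\langle x_3\rangle\langle y\rangle$, which is absorbed by $V(x_3)$ and by the $\cL^1_{\sigma'}$ weight.
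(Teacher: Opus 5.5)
Your overall route is the paper's. You use a third-order Born expansion. The Born terms are handled with Lemma~\ref{lem2}, the bound $\cV:\cF_{-a}\to\cF_{b}$ for $a+b\le\beta$, and time convolution; accepting their Duhamel form is exactly what the paper does via \eqref{conv}. For $\mathcal W(t)$ you use a kernel bound uniform in $p$: Lemma~\ref{l1} with $g=\psi(y,z,\cdot)$ and Lemma~\ref{lemconst} in (i), and one integration by parts plus Lemma~\ref{l1} and \eqref{psi-est} in (ii).

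However, your weight bookkeeping for (i) is wrong. For $\sigma<1$, in the chain $\cF_\sigma\to\cF_{-\sigma_1}\to\cF_{\sigma_1}\to\cF_{-\sigma}$ both outer factors of $\cU_0$ decay only like $(1+\cdot)^{-\sigma}$. Lemma~\ref{lem2} plus embedding gives $t^{-\min(a,b)}$, and enlarging one weight alone cannot help, since high-energy mass travels to $|x|\sim t$. So $\max(a,b)=\sigma<1$, your convolution criterion does not apply, and you do not get $t^{-\min(\sigma,1)}$ for $\sigma\ge 1/2$. What is true is
\[
\int_0^t(1+t-s)^{-\sigma}(1+s)^{-\sigma}\,ds\lesssim t^{1-2\sigma},
\]
which is $o(t^{-1/2})$ precisely for $\sigma>3/4$. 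That is where the threshold comes from, and asymmetric weights do not improve the outer exponents.

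The middle weight matters only for the second Born term. Taking $\sigma_1\in(1,\beta/2]$ on the inner factor gives $\int_0^s(1+s-u)^{-\sigma_1}(1+u)^{-\sigma}du\lesssim(1+s)^{-\sigma}$, and the outer integral is again $O(t^{1-2\sigma})$. With weight $\sigma$ throughout you would only get $t^{2-3\sigma}$, forcing $\sigma\ge 5/6$. The paper runs the convolution only at $\sigma=\beta/2>1$, which by itself gives \eqref{b1} only for $\sigma>1$. So this corrected computation is what actually covers $3/4<\sigma\le 1$.

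Of the alternatives you list for $\mathcal W$, only the paper's variant works in each case.

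\textbf{Case (i).} Integrating by parts would put the factor $p$ and $\partial_k\psi$ into the amplitude. That costs the weights $\langle x\rangle\langle y\rangle$, hence $\sigma>3/2$, and needs \eqref{psi-est}, which the paper invokes only for $V\in L^1_2$, i.e.\ $\beta>3$. Instead, apply Lemma~\ref{l1} directly. This works because every entry of the kernel, and of $\partial_x$ of its first row, carries a factor $O(k^{-2})$. The exponent $\alpha=2>3/2$ compensates for the degenerating curvature $\omega''(k)=m^2/\omega^3(k)$.

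\textbf{Case (ii).} A second integration by parts would need $\partial_k^2\psi$, which is not available under $\beta>3$. It would also cost two powers of the weights, pushing $\sigma$ beyond $5/2$. The right step is a single integration by parts against $\E^{it\omega(k)}$ alone, keeping $\E^{ikp}$ in the amplitude; the full phase $\pm t\omega(k)+kp$ can be stationary. The resulting factor $p\le(1+|x|)(1+|y|)(1+2|u|)(1+2|{\rm w}|)(1+2|z|)$ is then paid for by $\langle x\rangle\langle y\rangle$ and by $V\in L^1_1$. Finally, Lemma~\ref{l1} supplies the remaining $t^{-1/2}$.
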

%%%%%%%%%%%
\begin{proof}
{\it Step i)}.
Substituting the  resolvent identity 
$$
R(\lambda)= R_0(\lambda)- R_0(\lambda)V R_0(\lambda)+ (R_0(\lambda)V)^2 R_0(\lambda) + (R_0(\lambda)V)^3 R(\lambda)
$$ 
into \eqref{R}, we obtain
\[
\E^{-it\cK}\,\zeta(\cK^2)=\cU_0(t)+\cU_1(t)+\cU_2(t)+{\mathcal W}(t),,
\]
where $\cU_0(t)=\E^{-it\mathbf{K_0}}\,\zeta(\cK_0^2)$, and
\begin{equation*}
\cU_j(t)=\frac 1{2\pi i}\int\limits_{\Gamma}\! \E^{-i\omega t}
([(\cR_0{\mathcal V})^j\cR](\omega+i0)-[(\cR_0{\mathcal V})^j\cR_0](\omega-i0))\zeta(\omega^2) d\omega, \quad j=1,2,
\end{equation*}
\begin{equation*}
{\mathcal W}(t)=\frac 1{2\pi i}\int\limits_{\Gamma}\! \E^{-i\omega t}
([(\cR_0{\mathcal V})^3\cR](\omega+i0)-[(\cR_0{\mathcal V})^3\cR](\omega-i0))\zeta(\omega^2) d\omega.
\end{equation*}
Here
\begin{equation}\label{cV}
{\mathcal V}(x)=\left(\begin{array}{cc}
  0               &   0
  \\
  iV(x)   &   0
  \end{array}\right).
\end{equation}
Due to Lemma \ref{lem2}, 
\begin{equation}\label{U0}
\big\| \cU_0(t)\big\|_{{\mathcal F}_{\sigma}\to {\mathcal F}_{\sigma}}
={\mathcal O}(t^{-\sigma}),\quad t\to \infty,\quad \sigma>0.
\end{equation}
Let us prove that for $\beta>2$ %and $\sigma=\beta/2$
\begin{equation}\label{U1}
\big\| \cU_j(t)\big\|_{{\mathcal F}_{\beta/2}\to {\mathcal F}_{-\beta/2}}
={\mathcal O}(t^{-\beta/2}),\quad t\to \infty,\quad j=1,2.
\end{equation}
\smallskip\\
For arbitrary  $\Psi_0\in {\mathcal F}_{\beta/2}$, denote  $\Psi_{j+1}(t)=\cU_j(t)\Psi_0$, $j=0,1,2$.
%%%%%%%%%%%%%%%%%%
\begin{lemma}(cf. \cite[Lemma 3.8]{KK10}, \cite[Lemma 36.6]{KK12}). 
The convolution representation hold
\begin{equation}\label{conv}
\Psi_{j+1}(t)=
 i\int_0^t \cU_0(t-\tau){\mathcal V} \Psi_j(\tau)~d\tau,~~~~t\in\R,\quad j=1,2,
\end{equation}
where the integral converges in ${\mathcal F}_{-\beta/2}$.
\end{lemma}
By Lemma \ref{lem2},
$\Vert\Psi_1\Vert_{\cF_{-\beta/2}}\le C (1+|t|)^{-\beta/2}\Vert \Psi_0\Vert_{\cF_{\beta/2}}$.
 Applying  Lemma \ref{lem2} to the integrand in (\ref{conv}), 
 we obtain for $\sigma=\beta/2$
 \begin{equation*}
 \Vert \cU_0(t-\tau){\mathcal V} \Psi_1(\tau)\Vert_{\cF_{-\sigma}}\le 
 \frac{C\Vert {\mathcal V} \Psi_1(\tau)\Vert_{\cF_{\sigma}}}{(1\!+|t-\!\tau|)^{\sigma}}
 \le\frac{C_1\Vert \Psi_1(\tau)\Vert_{\cF_{-\sigma}}}{(1\!+|t-\!\tau|)^{\sigma}}
  \le\!\frac{C_2\Vert \Psi_0\Vert_{\cF_{\sigma}}}{(1\!+|t-\!\tau|)^{\sigma}(1\!+|\tau|)^{\sigma}}
\end{equation*}  
Therefore, integrating here in $\tau$, we obtain 
\begin{equation*}
  \Vert \Psi_2(t)\Vert_{\cF_{-\beta/2}} \le C (1+|t|)^{-\beta/2}\Vert \Psi_0\Vert_{\cF_{\beta/2}}.
\end{equation*}
Similarly,
\begin{equation*}
  \Vert \Psi_3(t)\Vert_{\cF_{-\beta/2}} \le C (1+|t|)^{-\beta/2}\Vert \Psi_0\Vert_{\cF_{\beta/2}}.
\end{equation*}
Hence, \eqref{U1} follows.
\smallskip\\
It remains to estimate the operator  ${\mathcal W}(t)$ with the kernel
\begin{equation}\label{W}
\!\!\!\!\!{\mathcal W}(t,x,y)
=\!\int\limits_{\R^3} \frac{{\mathbf V}(u,\rm w,z)}{16 \pi i}\left(\int\limits_\R\! \xi(k){\mathcal M}_t(k)
\frac{\E^{ikp(x,y,u,\rm w,z)}}{k^3}\psi(y,z,k)dk\right) dud{\rm w} dz,
\end{equation}
where we denote  $p(x,y,u,\rm w,z)=|x-u|+|u-{\rm w}|+|{\rm w}-z|+|z-y|$,  $\xi(k)=\zeta(k^2+m^2)$,
${\mathbf V}(u,\rm w,z)=V(u)V(\rm w)V(z)$.
We will apply the following version of \cite[Lemma 2]{MSW}:
%%%%%%%%%%%%%%%%%%%%%%%%%%%%%%%%%%%%%%%%%%%%%%%%%%%%%%%%%%%%%%%%%%%%%%%%%%%%%%%%%
%%%%%%%%%%%%%%%%%%%%%%%%%%%%%%%%%%%%%%%%%%%%%%%%%%%%%%%%%%%%%%%%%%%%%%%%%%%%%%%%%
\begin{lemma}\label{l1} (cf. \cite[Lemma 5.5]{EKTM})
Let $\eta(k)$, $k\geq 1$, be a smooth function such that
 $|\eta^{(j)}(k)|\le k^{-j}$ for $j=0,1$. Then for any $g(k)\in \mathcal A_1$,
 $\alpha> 3/2$ and $t\ge 1$
\begin{equation}\label{OI-est}
\sup\limits_{p\in\R}\Big|\int_{1}^\infty\eta(k)\frac{\E^{\pm it\omega(k)+ ik p}}
{k^\alpha}g(k)dk\Big|\le C t^{-1/2}\Vert g\Vert_{\mathcal A_1} ,\quad \omega(k)=\sqrt{k^2+m^2}.
\end{equation}
\end{lemma}
Applying this lemma with  $g(k)=\psi(y,z,k)$, $p= p(x,u,\rm w,z,y)$, and taking into account \eqref{thh}, we get
\begin{equation}\label{born11}
\sup\limits_{x,y}|{\mathcal W}^{ij}(t,x,y)|\le C t^{-1/2}\|V\|_{L^1}^3\le C_1t^{-1/2},\quad t\ge 1,\quad i,j=1,2.
\end{equation}
Here ${\mathcal W}^{ij}$ are the entries of the matrix ${\mathcal W}$. Further,
\begin{eqnarray}\nonumber
\!\!\!\!\!\!\!\!\!\!\!\! &&\!\!\!\!\!\!\!\!\partial_x{\mathcal W}(t,x,y)\\
\nonumber
\!\!\!\!\!\!\!\!\!\!\!\!&&\!\!\!\!\!\!\!\!=\int\limits_{\R^3} {\rm sgn} (x\!-\!u)\frac{{\mathbf V}(u,\rm w,z)}{16 \pi}\left(\int\limits_{\R} \xi(k)
{\mathcal M}_t(k)
\frac{\E^{ikp(x,y,u,\rm w,z)}}{k^2}\psi(y,z,k)dk\right) dud{\rm w} dz.
\end{eqnarray}
Hence, similarly to \eqref{born11},
\begin{equation}\label{born12}
\sup\limits_{x,y}|\partial_x{\mathcal W}^{1j}(t,x,y)|\le C t^{-1/2},\quad t\ge 1,\quad j=1,2.
\end{equation}
Bounds \eqref{born11} and \eqref{born12} imply that
\[%\begin{equation}\label{U2}
\Vert {\mathcal W}(t)\Vert_{\cL^{!}\to \cL^\infty}\le C t^{-1/2}, \quad t\ge 1.
\]%\end{equation}
Together with  \eqref{U0} and \eqref{U1} this gives \eqref{b1}.
\smallskip\\
{\it Step ii)}.
Now we rewrite \eqref{W} as follows
\begin{eqnarray}\nonumber
\!\!\!\!\!\!\!\!\!\!&&\!\!\!\!\!\!\!\!\!\!\!\!\!{\mathcal W}(t,x,y)\\
\label{W1}
\!\!\!\!\!\!\!\!\!\!&&\!\!\!\!\!\!\!\!\!\!\!\!\!
= \!\sum_{\pm} \!\int\limits_{\R^3} \frac{{\mathbf V}(u,{\rm w},z)}{16 \pi i}\left(\int\limits_\R\! \xi(k)A_{\pm}(k)
\frac{\E^{it\omega(k)+ikp(x,y,u,{\rm w},z)}}{k^3}\, \psi(y,z,k)dk\!\right)\! dud{\rm w} dz,
\end{eqnarray}
where
\[
A_{\pm}(k)=\begin{pmatrix}
1  & \mp\ds\frac{i}{\omega(k)}\\
  \pm i\omega(k)& 1
\end{pmatrix},
\]
Integrating by parts in inner integral of \eqref{W1}, we get
\begin{eqnarray}\nonumber
&&F_{\pm}(t,x,y,u,{\rm w},z):=\int \xi(k)A_{\pm}(k)\frac{\E^{it\omega(k)+ikp(x,y,u,{\rm w},z)}}{k^3}\, \psi(y,z,k)dk\\
\nonumber
&&=\frac{i}{t}\int \E^{it\omega(k)}\frac{\partial}{\partial k}
\Big[\E^{ikp(x,y,u,{\rm w},z)}\xi(k)\frac{\omega(k)}{k^4}\,A_{\pm}(k) \psi(y,z,k)\Big]dk.
\end{eqnarray}
Recall that in the case $V\in L^1_2$,
$\Vert \frac{\partial}{\partial k}\psi(z,y,k)\Vert_{\mathcal A}\le C(1+|z|)(1+|y|)$  by \eqref{psi-est}. 
Moreover,
\[
|x-u|+|u-{\rm w}|+|{\rm w}-z|+|z-y|\le (1+|x|)(1+|y|)(1+2|z|) (1+2|u|)(1+2|{\rm w}|).
\] 
Hence, by Lemma \ref{l1}, 
\[
|F_{\pm}(t,x,y,u,{\rm w},z) |\le Ct^{-3/2} (1+|x|)(1+|y|)(1+|u|)(1+|{\rm w}|)(1+|z|),\quad t\ge 1.
\]
The last estimate and \eqref{W1}  then imply
\[
|{\mathcal W}^{ij}(x,y)|\le  Ct^{-3/2} (1+|x|)(1+|y|),\quad t\ge 1,\quad i,j=1,2.
\] 
Similarly,
\[
|\partial_x {\mathcal W}^{1j}(x,y)|\le  Ct^{-3/2} (1+|x|)(1+|y|),\quad t\ge 1,\quad j=1,2.
\]
Therefore,
\[
\Vert {\mathcal W}(t)\Vert_{\cL^1_1\to\cL^\infty_1}\le  Ct^{-3/2}, \quad t\ge 1.
\]
Together with  \eqref{U0} and \eqref{U1} this gives \eqref{b2}.
\end{proof}
%%%%%%%%%%%%%%%%%%%%
\appendix
\section{Proof of Lemma \ref{RT}}
%%%%%%%%%%%%%%%%%%%%
%%%%%%%%%%%%%%%%%%%%%%%%%%%%%%%%%%%%%%%%%%%%%%%%%%%%%%%%%%%%%%%%%%%%%
Suppose that $f$, $g$ are  simple functions, and for $0\le {\rm Re}\, z\le 1$ define the functions 
$$
\varphi(z):=\varphi(x,z)=\langle x\rangle^{-\sigma+\sigma(z)}f(x),
\quad  \psi(z):=\psi(x,z)=\langle x\rangle^{\sigma-\sigma(z)}g(x),
$$
where $\sigma(z):=(1-z)\sigma_0+z\sigma_1$. One has
$$
\Vert\varphi(it)\Vert_{L^2_{-\sigma_0}}^2=
\int \langle x\rangle^{-\sigma_0} |\langle x\rangle^{-\sigma+2(1-it)\sigma_0+it\sigma_1)}f(x)|^2dx
=\Vert f\Vert_{L^2_{-\sigma}}^2=1,
$$
$$
\Vert\varphi(1+it)\Vert_{L^2_{-\sigma_1}}^2=
\int \langle x\rangle^{-\sigma_1} |\langle x\rangle^{-\sigma-2it\sigma_0+(1+it)\sigma_1)}f(x)|^2dx
=\Vert f\Vert_{L^2_{-\sigma}}^2=1.
$$
Similarly,
$\Vert\psi(it)\Vert_{L^2_{\sigma_0}}=\Vert\psi(1+it)\Vert_{L^2_{\sigma_1}}=1$.
The function
$F(z)=\langle T\varphi(z),\psi(z)\rangle$  is  analytic  in $0<  {\rm Re}\, z<1$, and  bounded  and continuous  in  $0\le  {\rm Re}\, z\le 1$.
Moreover,
\begin{eqnarray}\nonumber
|F(it)|&\le& \Vert T (\varphi(it))\Vert_ {L^2_{-\sigma_0}}\Vert\psi(it)\Vert_{L^2_{\sigma_0}}\le M_0,\\
\nonumber
|F(1+it)|&\le& \Vert T (\varphi(1+it))\Vert_ {L^2_{-\sigma_1}}\Vert\psi(1+it)\Vert_{L^2_{\sigma_1}}\le M_1.
\end{eqnarray}
Besides, $\varphi(\theta)=f$, $\psi(\theta)=g$, and hence $F(\theta)=\langle Tf,g\rangle$.
Applying  the Hadamard three-lines theorem (cf. \cite [Lemma 1.1.2]{BL}), we obtain
$$
M:=\sup\{\langle Tf,g\rangle: \Vert f\Vert_{L^2_{\sigma}}=\Vert g\Vert_{L^2_{\sigma}}=1\}\le M_0^{1-\theta}M_1^{\theta}.
$$
%%%%%%%%%%%%%%%%%%%%%%%%%%%%%%%%%%%%%%%%%%%%%%%%%%%%%%%%%%%%%%%%%%%%%

\end{document}